\newtheorem{theorem}{Theorem}[section]
\newtheorem{corollary}[theorem]{Corollary}
\newtheorem{lemma}[theorem]{Lemma}
\newtheorem{proposition}[theorem]{Proposition}
\newtheorem{remark}[theorem]{Remark}
\def\11{\textbf{$1$}}
\begin{document}

\title{Tingley's problem for spaces of trace class operators}

\author[F.J. Fern\'{a}ndez-Polo]{Francisco J. Fern\'{a}ndez-Polo}
\address{Departamento de An{\'a}lisis Matem{\'a}tico, Facultad de
Ciencias, Universidad de Granada, 18071 Granada, Spain.}
\email{pacopolo@ugr.es}

\author[J.J. Garc{\'e}s]{Jorge J. Garc\'{e}s}
\address{Departamento de Matem{\'a}tica,
Centro de Ci\^{e}ncias F{\'i}sicas e Matem{\'a}ticas,
Universidade Federal de Santa Catarina, Brazil}
\email{jorge.garces@ufsc.br}

\author[A.M. Peralta]{Antonio M. Peralta}

\address{Departamento de An{\'a}lisis Matem{\'a}tico, Facultad de
Ciencias, Universidad de Granada, 18071 Granada, Spain.}
\email{aperalta@ugr.es}

\author[I. Villanueva]{Ignacio Villanueva}
\address{Departamento de An{\'a}lisis Matem{\'a}tico \\
Facultad de Matem{\'a}ticas \\ Universidad Complutense de Madrid \\
Madrid 28040\\Spain}
\email{ignaciov@mat.ucm.es}


\subjclass[2010]{Primary 47B49, Secondary 46A22, 46B20, 46B04, 46A16, 46E40, .}

\keywords{Tingley's problem; extension of isometries; trace class operators}

\date{}

\begin{abstract} We prove that every surjective isometry between the unit spheres of two trace class spaces admits a unique extension to a surjective complex linear or conjugate linear isometry between the spaces. This provides a positive solution to Tingley's problem in a new class of operator algebras.
\end{abstract}

\maketitle
\thispagestyle{empty}

\section{Introduction}

In 1987, D. Tingley published a study on surjective isometries between the unit spheres of two finite dimensional Banach spaces, showing that any such mappings preserves antipodes points (see \cite{Ting1987}). A deep and difficult geometric problem has been named after Tingley's contribution. Namely, let $f: S(X)\to S(Y)$ be a surjective isometry between the unit spheres of two Banach spaces $X$ and $Y$. Is $f$ the restriction of a (unique) isometric real linear surjection $T : X\to Y$?  This problem remains unsolved even if we asume that $X$ and $Y$ are 2-dimensional spaces. Despite Tingley's problem still being open for general Banach spaces, positive solutions have been found for specific cases (see, for example, \cite{ChenDong2011,Di:p,Di:C,Di:8,Di:1,Ding07,Ding2002,Ding2009, Di,DiLi,FangWang06,FerPe17, FerPe17b, FerPe17c,KadMar2012,PeTan16,Ta:8,Ta:1,Ta:p,Tan2014, Tan2016, Tan2016-2, Tan2016preprint, Wang} and \cite{YangZhao2014}), and each particular case has required strategies and proofs which are more or less independent.\smallskip

For the purposes of this note, we recall that positive solutions to Tingley's problem include the following cases: $f: S(c_0)\to S(c_0)$ \cite{Di:C}, $f: S(\ell_1)\to S(\ell_1)$ \cite{Di:1}, $f: S(\ell_\infty)\to S(\ell_\infty)$ \cite{Di:8}, $f: S(K(H))\to S(K(H'))$, where $H$ and $H'$ are complex Hilbert spaces \cite{PeTan16}, and $f: S(B(H))\to S(B(H'))$ \cite{FerPe17b}. It is well known that the natural dualities $c_0^* = \ell_1$, and $\ell_1^*=\ell_\infty$ admit a non-commutative counterparts in the dualities $K(H)^*= C_1(H)$ and $C_1(H)^*=B(H)$, where $C_1(H)$ is the space of all trace class operators on $H$. So, there is a natural open question concerning Tingley's problem in the case of surjective isometries between the unit spheres of two trace class spaces. In this paper we explore this problem and we prove that every surjective isometry between the unit spheres of two trace class spaces admits a unique extension to a surjective complex linear or conjugate linear isometry between the spaces (see Theorem \ref{t Tingley for trace class infinite dimension}).\smallskip

The results are distributed in three main sections. In section \ref{sec:2} we establish new geometric properties of a surjective isometry $f:S(X)\to S(Y)$ in the case in which norm closed faces of the closed unit balls of $X$ and $Y$ are all norm-semi-exposed, and weak$^*$ closed faces of the closed unit balls of $X^*$ and $Y^*$ are all weak$^*$-semi-exposed (see Corollary \ref{c for spaces with property of semi-exposition for faces}). Applying techniques of geometry and linear algebra, in section \ref{sec:3} we present a positive answer to Tingley's problem for surjective isometries $f: S(C_1(H)) \to S(C_1(H'))$ when $H$ and $H'$ are finite dimensional (see Theorem \ref{t Tingley trace class finite dim}). The result in the finite dimensional case play a fundamental role in the proof of our main result.

\section{Facial stability for surjective isometries between the unit spheres of trace class spaces}\label{sec:2}

Let $H$ be a complex Hilbert space. We are interested in different subclasses of the space $K(H)$ of all compact operators on $H$. We briefly recall the basic terminology. For each compact operator $a$, the operator $a^* a$ lies in $K(H)$ and admits a unique square root $|a| = (a^* a)^{\frac12}$. The \emph{characteristic numbers} of the operator $a$ are precisely the eigenvalues of $|a|$ arranged in decreasing order and repeated according
to multiplicity. Since $|a|$ belongs to $K(H)$, only an at most countably number of its eigenvalues are greater than zero. According to the standard terminology, we usually write $\mu_n (a)$ for the $n$-th characteristic number of $a$. It is well known that $(\mu_n (a))_n\to 0$.\smallskip

The symbol $C_1=C_1(H)$ will stand for the space of trace class operators on $H$, that is, the set of all $a\in K(H)$ such that $$\|a\|_1 := \left(\sum_{n=1}^{\infty} |\mu_n (a)| \right) <\infty.$$ We set $\|a\|_{\infty} =\|a\|$, where the latter stands for the operator norm of $a$. The set $C_1$ is a two-sided ideal in the space $B(H)$ of all bounded linear operators on $H$, and $(C_1, \|.\|_1)$ is a Banach algebra. If tr$(.)$ denotes the usual trace on $B(H)$ and $a\in K(H)$, we know that $a\in C_1$ if, and only if, tr$(|a|)<\infty$ and $\|a\|_1 = \hbox{tr} (|a|)$.  It is further known that the predual of $B(H)$ and the dual of $K(H)$ both can be identified with $C_1(H)$ under the isometric linear mapping $a\mapsto  \varphi_a$, where $\varphi_a (x) := \hbox{tr} (a x)$ ($a\in C_1(H), x\in B(H)$). The dualities $K(H)^*= C_1(H)$ and $C_1(H)^*=B(H)$ can be regarded as a non-commutative version of the natural dualities between $c_0$, $\ell_1$ and $\ell_\infty$.\smallskip

It is known that every element $a$ in $C_1(H)$ can be written uniquely as a (possibly finite) sum \begin{equation}\label{eq spectral resolution in trace class} a= \sum_{n=1}^{\infty} \lambda_n \eta_n \otimes \xi_n,
 \end{equation} where $(\lambda_n)\subset \mathbb{R}_0^{+}$, $(\xi_n)$, $(\eta_n)$ are orthonormal systems in $H$, and $\displaystyle \|a\|_1=\sum_{n=1}^{\infty} \lambda_n$.\smallskip

Along the paper, we shall try to distinguish between $C_1(H)$ and $C_1(H)^*\equiv B(H)$, however the reader must be warned that we shall regard $K(H)$ and $C_1(H)$ inside $B(H)$. For example, when an element $a$ in $C_1(H)$ is regarded in the form given in \eqref{eq spectral resolution in trace class}, the element $$s(a) = \sum_{n=1}^{\infty} \eta_n \otimes \xi_n,$$ is a partial isometry in $B(H)$ (called the support partial isometry of $a$ in $B(H)$), and it is precisely the smallest partial isometry $e$ in $B(H)$ satisfying $e (a) = \|a\|_1$.\smallskip

We recall that two elements $a,b\in B(H)$ are orthogonal ($a\perp b$ in short) if and only if $a b^*= b^* a=0$. The relation ``being orthogonal'' produces a partial order $\leq$ in the set $\mathcal{U} (B(H))$ of all partial isometries given by $w\leq s$ if and only if $s-w$ is a partial isometry with $s-w \perp w$ (this is the standard order employed, for example, in \cite{AkPed92,EdRutt88}).\smallskip

We refer to \cite[Chapter III]{GohbergKrein}, \cite[\S 9]{DunSchw63}, \cite[Chapter II]{Tak} and \cite[\S 1.15]{S} for the basic results and references on the spaces $K(H)$, $C_1(H)$ and $B(H)$.\smallskip

It is worth recalling that, by a result due to B. Russo \cite{Ru69} every surjective complex linear isometry $T:C_1(H) \to C_1(H)$ is of the form $T(x) = v x u$ or of the form $T(x) = v x^t u$ ($x\in C_1(H)$), where $u$ and $v$ are unitary elements in $B(H)$ and $x^t$ denotes the transpose of $x$ (compare \cite[Theorem 11.2.2]{FleJam08}).\smallskip

The non-commutative Clarkson-McCarthy inequalities (see \cite[Theorem 2.7]{McCarthy67}) can be written as follows: \begin{equation}\label{Clarkson-McCarthy ineq 0pleq2}  \left( \|a\|_1 + \|b\|_1\right) \leq \|a+ b\|_1+ \|a-b\|_1 \leq 2 \left( \|a\|_1 + \|b\|_1 \right)
\end{equation} holds for every $a$ and $b$ in $C_1(H)$. It is further known that equality $$\|a+ b\|_1+ \|a-b\|_1 = 2 \left( \|a\|_1 + \|b\|_1 \right)$$ holds in \eqref{Clarkson-McCarthy ineq 0pleq2} if and only if $(a^* a) (b^* b)=0$, which is equivalent to say that $a$ and $b$ are orthogonal as elements in $B(H)$ ($a\perp b$ in short), or in other words $s(a)$ and $s(b)$ are orthogonal partial isometries in $B(H)$ (i.e. $(s(a)^* s(a), s(b)^* s(b))$ and $(s(a) s(a)^*, s(b) s(b)^*)$ are two pairs of orthogonal projections in $B(H)$). Consequently, if we fix $a,b\in S(C_1(H))$ we can conclude that \begin{equation}\label{eq orthogonality in Cp} \|a\pm b\|_1 = 2 \Leftrightarrow a\perp b \hbox{ (in $C_1(H)$)} \Leftrightarrow s(a)\perp s(b) \hbox{ (in $B(H)$).}
\end{equation}

Let us recall a technical result due to X.N. Fang, J.H. Wang and G.G. Ding, who established it in \cite{FangWang06} and \cite{Ding07}, respectively.

\begin{lemma}\label{l FaWang}{\rm(\cite[Corollary 2.2]{FangWang06}, \cite[Corollary 1]{Ding07})} Let $X$ and $Y$ be normed spaces and let $f : S(X) \to S(Y)$ be a surjective isometry. Then for any $x,y$ in $S(X)$, we have $\| x + y \|= 2$ if and only if $\| f(x) + f(y) \|= 2$.$\hfill\Box$
\end{lemma}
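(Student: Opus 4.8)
The plan is to reduce the equivalence to a single implication and then to characterise the condition $\|x+y\|=2$ by metric data that a surjective isometry of the sphere transports automatically. Since $f^{-1}\colon S(Y)\to S(X)$ is again a surjective isometry, it suffices to prove that $\|x+y\|=2$ implies $\|f(x)+f(y)\|=2$; the reverse implication then follows by applying this to $f^{-1}$ and the points $f(x),f(y)$.

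First I would translate the statement about sums into one about distances, which is where the isometry $f$ can act. For $x,y\in S(X)$ one has $\|x+y\|=\|x-(-y)\|$, and since $2$ is exactly the diameter of $S(X)$, the condition $\|x+y\|=2$ says that $x$ and $-y$ realise the diameter of the sphere. Equivalently, $\|x+y\|=2$ holds if and only if the midpoint $\tfrac{x+y}{2}$ is again a unit vector; a Hahn--Banach supporting functional $\phi$ at this midpoint then satisfies $\phi(x)=\phi(y)=1$, whence the whole segment $[x,y]$ lies on $S(X)$ and the norm is additive along it. Thus $t\mapsto (1-t)x+ty$ is an isometric copy of a real interval sitting inside $S(X)$, and applying $f$ produces an isometric copy of the same interval inside $S(Y)$ joining $f(x)$ and $f(y)$; moreover $f$ preserves the extremal distance, $\|f(x)-f(-y)\|=\|x-(-y)\|=2$.

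The main obstacle is to pass from this transported data back to the sum $\|f(x)+f(y)\|$, and the difficulty is genuinely twofold. An isometric interval in a general normed space need not be the straight segment, so the image need not lie on a face of the unit ball of $Y$ a priori; and the point $f(-y)$ above need not coincide with $-f(y)$, since surjective isometries between spheres are not known in general to preserve antipodes in infinite dimensions (were this available, one would obtain the stronger identity $\|f(x)+f(y)\|=\|x+y\|$ directly). I would attack this using bijectivity: set $y^{-}:=f^{-1}(-f(y))\in S(X)$ and observe $\|y-y^{-}\|=\|f(y)-(-f(y))\|=\|2f(y)\|=2$, so that $y^{-}$ is diametral to $y$, exactly as $-y$ is. The task then reduces to showing that, for every $x\in S(X)$, the point $x$ is diametral to $-y$ if and only if it is diametral to $y^{-}$. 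I expect the crux to be establishing enough rigidity of the set of diametral partners of a fixed unit vector — via the midpoint/segment description above, together with the symmetric use of $f$ and $f^{-1}$ — to identify these two conditions, which is precisely the antipodal compatibility needed to convert $\|f(x)-f(-y)\|=2$ into $\|f(x)+f(y)\|=2$. Once this is in place, the two implications close the argument.
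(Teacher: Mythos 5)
The paper offers no proof of this lemma: it is quoted from \cite[Corollary 2.2]{FangWang06} and \cite[Corollary 1]{Ding07} and stated without argument, so the only benchmark is the cited literature, where the statement is a genuine theorem with a nontrivial proof. Measured against that, your proposal is an honest plan rather than a proof, and it stops exactly where the real work begins. What you do establish is fine: the reduction to a single implication via $f^{-1}$, the equivalence of $\|x+y\|=2$ with the segment $[x,y]$ lying on $S(X)$ (via a supporting functional at the midpoint), and the identity $\|f(x)+f(y)\|=\|x-y^{-}\|$ with $y^{-}:=f^{-1}(-f(y))$, which correctly reduces the lemma to showing that $x$ diametral to $-y$ forces $x$ diametral to $y^{-}$.

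The gap is that this last step \emph{is} the lemma, and you give no argument for it beyond the expectation that the set of diametral partners of a unit vector is rigid enough; it is not, on the strength of the information you have actually extracted. All you know about $y^{-}$ is that it is diametral to $y$, and two points diametral to the same $y$ can have completely different diametral-partner sets: in $\ell_\infty^2$ with $y=(1,1)$, both $(-1,-1)$ and $(0,-1)$ are diametral to $y$, yet $(1,0)$ is at distance $2$ from the first and at distance $1$ from the second. Nor does the transported isometric interval rescue the argument: in $\mathbb{R}^2$ with the hexagonal norm $\|(a,b)\|=\max\{|a|,|b|,|a-b|\}$, the path running along the two edges from $(1,0)$ through $(1,1)$ to $(0,1)$ is an isometric embedding of $[0,2]$ into the unit sphere whose endpoints satisfy $\|(1,0)-(0,1)\|=2$ but $\|(1,0)+(0,1)\|=1$; so an isometric copy of $[0,\|u-v\|]$ joining $u$ and $v$ inside a unit sphere does not force $\|u+v\|=2$. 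The known proofs must exploit the surjectivity of $f$ far more globally, testing against many auxiliary preimages and supporting functionals rather than the single point $f(-y)$, and none of that appears in the proposal; as written, the argument does not close.
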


Throughout the paper, the extreme points of a convex set $C$ will be denoted by $\partial_e(C)$, and the symbol $\mathcal{B}_{X}$ will stand for the closed unit ball of a Banach space $X$. We shall write $\mathbb{T}$ for the unit sphere of $\mathbb{C}$. Following standard notation, the elements in $\partial_e(\mathcal{B}_{C_1(H)})$ are called \emph{pure atoms}. It is known that every pure atom in $C_1(H)$ is an operator of the form $\eta\otimes \xi$, where $\xi$ and $\eta$ are elements in $S(H)$.\smallskip

Given $\xi,\eta$ in a Hilbert space $H$, the symbol $\eta\otimes \xi$ will denote the rank one operator on $H$ defined by $\eta\otimes \xi (\zeta) = \langle\zeta | \xi \rangle \eta$ $(\zeta\in H$). Clearly $\eta\otimes \xi\in C_1(H)$. When $\eta\otimes \xi$ is regarded as an element in $C_1 (H)$, we shall identify it with the normal functional on $B(H)$ given by $\eta\otimes \xi (x) = \langle x(\eta) |\xi\rangle$ ($x\in B(H)$). As it is commonly assumed, given $\phi\in B(H)_*$ and $z\in B(H)$ we define $\phi z, z \phi\in B(H)_*$ by $(\phi z) (x) = \phi (z x)$ and $(z \phi) (x) = \phi (x z)$ ($x\in B(H)$). Accordingly with this notation, for $\eta\otimes \xi$ in $C_1 (H)=B(H)_*$, we have $(\eta\otimes \xi) z = \eta\otimes z^*(\xi)$ and $ z (\eta\otimes \xi) = z(\eta) \otimes \xi,$ for every $z\in B(H)$.\smallskip

We also recall an inequality established by J. Arazy in \cite[Proposition in page 48]{Ar}: For each projection $p$ in $B(H)$ and every $x\in C_1(H)$, we have \begin{equation}\label{ineq Arazy} \|x\|_1^2 \geq \|p x p \|_1^2 + \| p x (1-p)\|_1^2 + \|(1-p) x p\|_1^2 + \|(1-p) x (1-p)\|_1^2
\end{equation}

Suppose $\{\xi_i\}_{i\in I}$ is an orthonormal basis of $H$. The elements in the set $\{\xi_i\otimes \xi_i: i \in I \}$ are mutually orthogonal in $C_1 (H)$. Actually, the dimension of $H$ is precisely the cardinal of the biggest set of mutually orthogonal pure atoms in $C_1(H)$.\smallskip

We can state now a non-commutative version of \cite[Lemma 3]{Di:1}.

\begin{lemma}\label{l preservation of orthogonality} Let $H$ and $H'$ be complex Hilbert spaces, and let $f: S(C_1(H))\to S(C_1(H'))$ be a surjective isometry. Then $f$ preserves orthogonal elements in both directions, that is, $a\perp b$ in $S(C_1(H))$ if and only if $f(a)\perp f(b)$ in $S(C_1(H'))$.
\end{lemma}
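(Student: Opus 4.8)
The plan is to reduce the preservation of orthogonality to two metric conditions, each of which is controlled by a tool already at hand. The crucial observation is the characterization recorded in \eqref{eq orthogonality in Cp}: for norm-one elements $a,b$ the relation $a\perp b$ in $C_1(H)$ is equivalent to the simultaneous equalities $\|a+b\|_1=2$ and $\|a-b\|_1=2$, and the very same characterization applies to the images $f(a),f(b)$ regarded in $S(C_1(H'))$. Thus it suffices to show that a surjective isometry transports each of these two maximal-distance equalities in both directions, and then to reassemble the two pieces.

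First I would dispose of the difference, which comes essentially for free: since $f$ is by hypothesis a distance-preserving map between the spheres, $\|f(a)-f(b)\|_1=\|a-b\|_1$ for every pair $a,b$, so in particular $\|a-b\|_1=2$ holds exactly when $\|f(a)-f(b)\|_1=2$. For the sum I would invoke Lemma \ref{l FaWang} of Fang, Wang and Ding, valid for any surjective isometry between unit spheres of normed spaces, which gives $\|a+b\|_1=2$ if and only if $\|f(a)+f(b)\|_1=2$. Chaining these two equivalences with the orthogonality characterization in both $C_1(H)$ and $C_1(H')$ produces $a\perp b \Leftrightarrow f(a)\perp f(b)$; since every link is a genuine two-sided equivalence, the conclusion holds in both directions at once.

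The point where a naive argument could break down is that operator orthogonality is detected by \emph{both} the sum and the difference attaining the extreme value $2$, so one cannot get away with testing only one of them. The small insight that makes the proof clean is that the difference equality is automatically respected by the isometry, so the genuinely nontrivial ingredient, the Fang, Wang and Ding preservation of the sum, only needs to be applied to the single pair $(a,b)$. In particular no preservation of antipodes, $f(-b)=-f(b)$, is needed; establishing such a relation would otherwise be the main obstacle, since routing the difference condition through the pair $(a,-b)$ would force one to compare $f(-b)$ with $f(b)$.
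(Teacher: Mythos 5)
Your proof is correct and follows essentially the same route as the paper: both arguments use the characterization $a\perp b \Leftrightarrow \|a\pm b\|_1=2$ from \eqref{eq orthogonality in Cp}, get the difference equality directly from the isometry hypothesis, and get the sum equality from Lemma \ref{l FaWang}. Nothing further is needed.
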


\begin{proof} Take $a,b$ in $S(C_1(H))$. We have already commented that $a\perp b$ if and only if $\|a\pm b\|_1 = 2$ (compare \eqref{eq orthogonality in Cp}). Since $f$ is an isometry we deduce that $$\|f(a)-f(b)\|_1=2.$$

Lemma \ref{l FaWang} implies that $\|f(a) +f(b)\|_1=2$, and hence $\|f(a) \pm f(b)\|_1=2$, which assures that $f(a)\perp f(b)$.
\end{proof}

Among the ingredients and prerequisites needed in our arguments we highlight the following useful geometric result which is essentially due to L. Cheng and Y. Dong \cite[Lemma 5.1]{ChenDong2011} and R. Tanaka \cite{Tan2016} (see also \cite[Lemma 3.5]{Tan2014}, \cite[Lemmas 2.1 and 2.2]{Tan2016-2}).

\begin{proposition}\label{p faces ChengDong11}{\rm(}\cite[Lemma 5.1]{ChenDong2011}, \cite[Lemma 3.3]{Tan2016}, \cite[Lemma 3.5]{Tan2014}{\rm)} Let $f: S(X) \to S(Y)$ be a surjective isometry between the unit spheres of two Banach spaces, and let $\mathcal{M}$ be a convex subset of $S(X)$. Then $\mathcal{M}$ is a maximal proper face of $\mathcal{B}_X$ if and only if $f(\mathcal{M})$ is a maximal proper {\rm(}closed{\rm)} face of $\mathcal{B}_Y$.$\hfill\Box$
\end{proposition}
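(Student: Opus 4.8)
The plan is to prove the statement by characterizing maximal proper faces in terms of data that a surjective isometry of spheres is guaranteed to preserve: the metric, together with the relation $\|x+y\|=2$.

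First I would reduce the notion of maximal proper face to a purely convex-geometric one. For a convex subset $\mathcal{M}\subseteq S(X)$ one automatically has $\|u+v\|=2$ for all $u,v\in\mathcal{M}$, since the midpoint lies on $S(X)$; in fact more is true, namely that every convex subset of $S(X)$ is contained in a face of the form $F_\phi:=\{x\in S(X):\phi(x)=1\}$ with $\phi\in S(X^*)$. To see this, write an arbitrary real combination $\sum_i t_i c_i$ of points of $\mathcal{M}$ as $Pu-Nv$ with $u,v\in\mathcal{M}\subseteq S(X)$ and $P,N\ge 0$; the triangle inequality gives $\|\sum_i t_i c_i\|\ge |P-N|=|\sum_i t_i|$, so the constant assignment $c\mapsto 1$ on $\mathcal{M}$ is a well-defined norm-one functional on $\mathrm{span}(\mathcal{M})$ and extends by Hahn--Banach to a norm-one $\phi$ with $\phi|_{\mathcal{M}}\equiv 1$. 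Consequently the maximal convex subsets of $S(X)$ are exactly the sets $F_\phi$ with $\phi\in\partial_e(\mathcal{B}_{X^*})$, and these coincide with the maximal proper (automatically norm-closed) faces of $\mathcal{B}_X$. Thus it suffices to prove that $f$ carries maximal convex subsets of $S(X)$ onto maximal convex subsets of $S(Y)$.

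For the transport under $f$, the only structure available is the metric, which $f$ preserves, and the relation $x\diamond y:\Leftrightarrow\|x+y\|=2$, which by Lemma \ref{l FaWang} $f$ preserves in both directions; recall also that $\|x+y\|=2$ is equivalent to $[x,y]\subseteq S(X)$. I would encode maximal convex subsets through the polarity $A\mapsto A^{\diamond}:=\{x\in S(X):\|x+a\|=2 \text{ for all } a\in A\}$, which satisfies $f(A^{\diamond})=f(A)^{\diamond}$ because $f$ is a $\diamond$-preserving bijection. The goal is then to show that the maximal convex subsets are precisely those $2$-sets $C$ that are $\diamond$-closed, in the sense $C=C^{\diamond}$. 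Granting this, one computes $f(C)^{\diamond}=f(C^{\diamond})=f(C)$, and since $C$ is a $2$-set so is $f(C)$, whence $f(C)$ is again a maximal convex subset. Surjectivity onto all maximal convex subsets of $S(Y)$, together with the reverse implication of the ``if and only if'', then follows by applying the same reasoning to the surjective isometry $f^{-1}$.

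The main obstacle is exactly the characterization $C=C^{\diamond}$ for maximal convex $C$, that is, the recovery of convexity from the pairwise relation $\diamond$. The inclusion $C\subseteq C^{\diamond}$ is immediate, but the reverse inclusion is delicate: a point standing in the relation $\diamond$ with \emph{every} element of a face need not, a priori, lie in that face, and more fundamentally the pairwise condition $\diamond$ is strictly weaker than joint convexity. This is already visible in $\ell_\infty^{3}$ (or in $C[0,1]$), where three unit vectors can be pairwise $\diamond$-related yet admit no common norming functional, so that maximal $\diamond$-cliques are strictly larger than faces. Closing this pairwise-to-global gap---proving that $\diamond$-closedness forces both membership in and exhaustion of a single face---is the technical heart of the matter, and is precisely what is supplied by the cited lemmas of Cheng--Dong and Tanaka.
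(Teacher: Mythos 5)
There is a genuine gap here, and you have in effect located it yourself in your closing paragraph. The preliminary reduction (maximal proper faces of $\mathcal{B}_X$ correspond to maximal convex subsets of $S(X)$, via the Hahn--Banach argument producing a norming functional) is fine, as is the observation that $f$ preserves the relation $x\diamond y\Leftrightarrow\|x+y\|=2$ and hence commutes with the polarity $A\mapsto A^{\diamond}$. But the entire transport step rests on the characterization ``$C$ is a maximal convex subset of $S(X)$ if and only if $C$ is a $\diamond$-clique with $C=C^{\diamond}$'', and this characterization is \emph{false}; your own $\ell_\infty^{3}$ example refutes it. Indeed, the three pairwise $\diamond$-related vectors $(1,1,-1)$, $(1,-1,1)$, $(-1,1,1)$ lie, by Zorn's lemma, in some maximal $\diamond$-clique $D$; maximality forces $D^{\diamond}\subseteq D$ while the clique property gives $D\subseteq D^{\diamond}$, so $D$ is a $\diamond$-closed $2$-set, yet $D$ is not convex (the midpoint $(1,0,0)$ of the first two vectors fails to be $\diamond$-related to the third, hence does not belong to $D$). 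Consequently the computation $f(C)^{\diamond}=f(C^{\diamond})=f(C)$ only shows that $f(C)$ is a maximal $\diamond$-clique; it does not show that $f(C)$ is convex, let alone a maximal convex subset.

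Note also that you have the difficulty inverted. The inclusion $C^{\diamond}\subseteq C$ for a maximal convex $C$ is the easy part: if $z\in C^{\diamond}$, then for each $c\in C$ the convex function $t\mapsto\|tz+(1-t)c\|$ equals $1$ at $t=0,\frac12,1$ and is bounded by $1$, hence is identically $1$, so $[z,c]\subseteq S(X)$; since $C$ is convex, every point of $\mathrm{conv}(C\cup\{z\})$ lies on such a segment, and maximality of $C$ forces $z\in C$. What genuinely fails is the converse implication that $\diamond$-closedness detects maximal convexity, i.e.\ exactly the pairwise-to-global passage that you defer to ``the cited lemmas of Cheng--Dong and Tanaka''. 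Since those lemmas \emph{are} the statement to be proved, the proposal is circular at its only nontrivial point. The paper itself offers no proof either (it quotes the result with references), but the published arguments it relies on do not proceed through the $\diamond$-polarity: they exploit metric information beyond the single relation $\|x+y\|=2$ together with the representation of maximal convex subsets by extreme norming functionals, precisely because, as your example shows, the pairwise relation alone cannot single out faces.
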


The previous result emphasizes the importance of a ``good description'' of the facial structure of a Banach space. A basic tool to understand the facial structure of the closed unit ball of a complex Banach space $X$ and that of the unit ball of its dual space is given by the ``facear'' and ``pre-facear'' operations. Following \cite{EdRutt88}, for each $F\subseteq \mathcal{B}_X$  and $G\subseteq \mathcal{B}_{X^*}$, we define
$$ F^{\prime} = \{a \in \mathcal{B}_{X^*}:a(x) = 1\,\, \forall x \in F\},\quad
G_{\prime} = \{x \in \mathcal{B}_X :a(x) = 1\,\, \forall a \in G\}.$$ Then, $F^{\prime}$ is a weak$^*$ closed face of $\mathcal{B}_{X^*}$ and $G_{\prime}$
is a norm closed face of $\mathcal{B}_X$. The subset $F$ is said to
be a \emph{norm-semi-exposed face} of $\mathcal{B}_X$ if $F=(F^{\prime})_{\prime}$, while the subset $G$ is called a \emph{weak$^*$-semi-exposed face} of $\mathcal{B}_{X^*}$ if $G =(G_{\prime})^{\prime}$. The mappings $F \mapsto F^{\prime}$
and $G \mapsto G_{\prime}$ are anti-order isomorphisms between the
complete lattices $\mathcal{S}_n(\mathcal{B}_X)$ of norm-semi-exposed faces
of $\mathcal{B}_X$ and $\mathcal{S}_{w^*}(\mathcal{B}_{X^*})$ of weak$^*$-semi-exposed
faces of $\mathcal{B}_{X^*}$ and are inverses of each other.\smallskip

Our next result is a generalization of the above Proposition \ref{p faces ChengDong11}.

\begin{proposition}\label{t ChengDong for general faces} Let $f: S(X) \to S(Y)$ be a surjective isometry between the unit spheres of two Banach spaces, and let $C$ be a convex subset of $S(X)$. Suppose that for every extreme point $\phi_0\in \partial_e(\mathcal{B}_{X^*})$, the set $\{\phi_0\}$ is a weak$^*$-semi-exposed face of $\mathcal{B}_{X^*}$. Then $C$ is a {norm-semi-exposed face} of $\mathcal{B}_X$ if and only if $f(C)$ is a {norm-semi-exposed face} of $\mathcal{B}_Y$.
\end{proposition}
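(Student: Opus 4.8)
The plan is to reduce the statement to \emph{maximal} proper faces, where Proposition~\ref{p faces ChengDong11} already gives what we need, and then to recover an arbitrary norm-semi-exposed face as an intersection of maximal ones. The engine of the argument is an observation valid in \emph{every} Banach space: every nonempty proper face $G$ of $\mathcal{B}_X$ has $G^{\prime}\neq\emptyset$, and every maximal proper face is automatically norm-semi-exposed. Indeed, a nonempty proper face lies in the unit sphere, so its closure $\overline{G}$ is disjoint from the open unit ball; separating these two convex sets produces a norm-one functional $a$ with $a\equiv 1$ on $\overline{G}$, whence $a\in G^{\prime}$. For a maximal proper face $F$ this gives $F\subseteq (F^{\prime})_{\prime}\subseteq\{x:a(x)=1\}\subsetneq \mathcal{B}_X$, and since $(F^{\prime})_{\prime}$ is again a proper face containing the maximal face $F$, maximality forces $F=(F^{\prime})_{\prime}$. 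The point of this remark is that on the \emph{target} side no hypothesis is needed: maximal proper faces of $\mathcal{B}_Y$ are norm-semi-exposed for free.

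For the forward implication I would first use the hypothesis to identify the maximal proper faces of $\mathcal{B}_X$. If $e\in\partial_e(\mathcal{B}_{X^*})$, then by assumption $\{e\}$ is weak$^*$-semi-exposed, i.e.\ $(\{e\}_{\prime})^{\prime}=\{e\}$; combined with $G^{\prime}\neq\emptyset$ for every proper face this shows $\{e\}_{\prime}$ is a maximal proper face, since any proper $G\supseteq\{e\}_{\prime}$ satisfies $\emptyset\neq G^{\prime}\subseteq(\{e\}_{\prime})^{\prime}=\{e\}$, whence $G^{\prime}=\{e\}$ and $G\subseteq (G^{\prime})_{\prime}=\{e\}_{\prime}$. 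Now let $C=(C^{\prime})_{\prime}$ be norm-semi-exposed. The set $C^{\prime}$ is a weak$^*$-compact convex face of $\mathcal{B}_{X^*}$, so by Krein--Milman $C^{\prime}=\overline{\mathrm{co}}^{\,w^*}(\partial_e(C^{\prime}))$ with $\partial_e(C^{\prime})\subseteq\partial_e(\mathcal{B}_{X^*})$; since for fixed $x$ the set $\{a:a(x)=1\}$ is weak$^*$-closed and convex, $x$ satisfies $a(x)=1$ for all $a\in C^{\prime}$ as soon as it does so for every extreme point of $C^{\prime}$, giving
\[
C=(C^{\prime})_{\prime}=\bigcap_{e\in\partial_e(C^{\prime})}\{e\}_{\prime}.
\]
Thus $C$ is an intersection of maximal proper faces of $\mathcal{B}_X$. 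Applying Proposition~\ref{p faces ChengDong11} to each factor and using that a bijection commutes with arbitrary intersections, $f(C)=\bigcap_{e}f(\{e\}_{\prime})$ is an intersection of maximal proper faces of $\mathcal{B}_Y$. By the opening remark each $f(\{e\}_{\prime})$ is norm-semi-exposed, and an arbitrary intersection of norm-semi-exposed faces is again norm-semi-exposed (the map $F\mapsto(F^{\prime})_{\prime}$ is a closure operator, so its fixed sets form a complete lattice closed under intersections). Hence $f(C)$ is norm-semi-exposed.

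For the converse I would run the symmetric argument for the surjective isometry $f^{-1}:S(Y)\to S(X)$. Everything transports verbatim except for one point: to write a given norm-semi-exposed face $D$ of $\mathcal{B}_Y$ as an intersection of maximal proper faces one needs each $\{\psi\}_{\prime}$, $\psi\in\partial_e(D^{\prime})$, to be \emph{maximal}, which is precisely the assertion that the singletons $\{\psi\}$ are weak$^*$-semi-exposed in $\mathcal{B}_{Y^*}$. This is the main obstacle. In fact the forward map is injective and its image is exactly the family of intersections of maximal proper faces of $\mathcal{B}_Y$, so the reverse implication is \emph{equivalent} to the statement that every norm-semi-exposed face of $\mathcal{B}_Y$ arises as such an intersection — the Krein--Milman step of the previous paragraph, now carried out in $\mathcal{B}_{Y^*}$, which requires the analogue of the standing hypothesis on $\partial_e(\mathcal{B}_{Y^*})$. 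In the situations to which the result is applied, and in Corollary~\ref{c for spaces with property of semi-exposition for faces}, this semi-exposition property is available on both sides, and the two implications then combine to yield the stated equivalence.
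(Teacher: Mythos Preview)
Your forward implication is correct and follows essentially the same strategy as the paper: both arguments (i) observe that maximal proper faces are automatically norm-semi-exposed via a separation argument (the paper cites Eidelheit's theorem), (ii) use Krein--Milman together with the weak$^*$-semi-exposedness of extreme points of $\mathcal{B}_{X^*}$ to write a norm-semi-exposed face $C$ as an intersection of maximal proper faces, and (iii) push this through $f$ via Proposition~\ref{p faces ChengDong11} and the fact that bijections preserve intersections. The only cosmetic difference is in step (ii): the paper argues by contradiction, assuming $C\subsetneq\bigcap_{\mathcal{M}\supseteq C}\mathcal{M}$ and locating an extreme point of $C'$ outside $(\bigcap\mathcal{M})'$, whereas you write directly $C=(C')_{\prime}=\bigcap_{e\in\partial_e(C')}\{e\}_{\prime}$. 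These are two phrasings of the same idea.

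Your remark on the converse is well taken. The paper's proof, like yours, establishes only the forward implication; the ``only if'' part would require the analogous hypothesis on $\partial_e(\mathcal{B}_{Y^*})$ in order to run the symmetric argument with $f^{-1}$. The paper leaves this silent, but it is harmless for the sequel, since the proposition is only ever applied through Corollary~\ref{c for spaces with property of semi-exposition for faces}, where the semi-exposition hypotheses are imposed symmetrically on $X$ and $Y$ --- exactly the situation you flag.
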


\begin{proof} We begin with an observation. By Eidelheit's separation theorem \cite[Theorem 2.2.26]{Megg98}, every maximal proper face of $\mathcal{B}_{X}$ is a norm-semi-exposed face (compare \cite[Lemma 3.3]{Tan2016}).\smallskip

Suppose $C\in \mathcal{S}_n(\mathcal{B}_X)$. We set $$\Lambda_C^+:=\Big\{ \mathcal{M}: \mathcal{M} \hbox{ is a maximal proper face of $\mathcal{B}_X$ containing $C$} \Big\}.$$  Let us observe that $\bigcap_{\mathcal{M}\in \Lambda_C^+ } \mathcal{M}$ is a proper face of $\mathcal{B}_X$ containing $C$. Since a non-empty intersection of proper norm-semi-exposed faces of $\mathcal{B}_X$ (respectively, weak$^*$-semi-exposed face of $\mathcal{B}_{X^*}$) is a proper norm-semi-exposed face of $\mathcal{B}_X$ (respectively, a proper weak$^*$-semi-exposed face of $\mathcal{B}_{X^*}$), the set $\bigcap_{\mathcal{M}\in \Lambda_C^+ } \mathcal{M}$ is a proper {norm-semi-exposed face} of $\mathcal{B}_X$.\smallskip

We shall next show that \begin{equation}\label{eq semiexp face intersection of max proper faces containing it} C = \bigcap_{\mathcal{M}\in \Lambda_C^+ } \mathcal{M}.
\end{equation}

The inclusion $\subseteq$ is clear. To see the other inclusion we argue by contradiction, and thus we assume that $C\subsetneqq \bigcap_{\mathcal{M}\in \Lambda_C^+ } \mathcal{M}.$ Therefore, $C^{\prime} \supsetneqq \left( \bigcap_{\mathcal{M}\in \Lambda_C^+ } \mathcal{M}\right)^{\prime}$. The sets $C^{\prime}$ and $\left( \bigcap_{\mathcal{M}\in \Lambda_C^+ } \mathcal{M}\right)^{\prime}$ are weak$^*$-closed convex faces of $\mathcal{B}_{X^*}$, and hence, by the Krein-Milman theorem $C^{\prime} = \overline{\hbox{co}}^{w^*} (\partial_e(C^{\prime}))$. 
Therefore, having in mind that $C^{\prime}$ is a face, we can find an extreme point $\phi_0\in \partial_e(C^{\prime})\subset \partial_e(\mathcal{B}_{X^*})$ such that $\phi_0\notin \left( \bigcap_{\mathcal{M}\in \Lambda_C^+ } \mathcal{M}\right)^{\prime}$. Since, by hypothesis, $\{\phi_0\}$ is a weak$^*$-semi-exposed face of $\mathcal{B}_{X^*}$, we can easily check that $\mathcal{M}_0 =\{\phi_0\}_{\prime}$ is a maximal proper face of $\mathcal{B}_{X}$. Furthermore, $\phi_0 \in C^{\prime}$ implies that $C\subseteq \{\phi_0\}_{\prime}$ and hence $\mathcal{M}_0 \in  \Lambda_C^+$. Clearly $\phi_0\in \mathcal{M}_0^{\prime}\subseteq  \left( \bigcap_{\mathcal{M}\in \Lambda_C^+ } \mathcal{M}\right)^{\prime},$ which is impossible. We have thus proved \eqref{eq semiexp face intersection of max proper faces containing it}.\smallskip

Clearly, $f(C)\subseteq f(\mathcal{M})$ for every $\mathcal{M}\in \Lambda_C^+$, and thus $f(C) \subseteq  \bigcap_{\mathcal{M}\in \Lambda_C^+ } f\left(\mathcal{M} \right).$ Applying $f^{-1}$ and \eqref{eq semiexp face intersection of max proper faces containing it} we get $$ C \subseteq f^{-1}\left( \bigcap_{\mathcal{M}\in \Lambda_C^+ } f\left(\mathcal{M} \right)\right) \subseteq  \bigcap_{\mathcal{M}\in \Lambda_C^+ } f^{-1}\left(f\left(\mathcal{M} \right)\right)= \bigcap_{\mathcal{M}\in \Lambda_C^+ } \mathcal{M} =C.$$ Therefore, the identity $f(C) =  \bigcap_{\mathcal{M}\in \Lambda_C^+ } f\left(\mathcal{M} \right)$ follows from the bijectivity of $f$.\smallskip

Since, by Proposition \ref{p faces ChengDong11}, for each $\mathcal{M}\in \Lambda_C^+$, $f(\mathcal{M})$ is a maximal proper face of $\mathcal{B}_X$ and hence norm-semi-exposed, the set $f(C)$ coincides with a non-empty intersection of norm-semi-exposed faces, and hence $f(C)$ is a norm-semi-exposed face too.
\end{proof}

In certain classes of Banach spaces where norm closed faces are all norm-semi-exposed and weak$^*$ closed faces in the dual space are all weak$^*$-semi-exposed, the previous proposition becomes meaningful and guarantees the stability of the facial structure under surjective isometries of the unit spheres. For example, when $X$ is a C$^*$-algebra or a JB$^*$-triple, every proper norm closed face of $\mathcal{B}_{X}$ is norm-semi-exposed, and every weak$^*$ closed proper face of $\mathcal{B}_{X^*}$ is weak$^*$-semi-exposed (see \cite{AkPed92}, \cite{EdFerHosPe2010}, and \cite{FerPe10}). The same property holds when $X$ is the predual of a von Neumann algebra or the predual of a JBW$^*$-triple (see \cite{EdRutt88}). Suppose $X$ and $Y$ are Banach spaces satisfying the just commented property, and $f: S(X)\to S(Y)$ is a surjective isometry. Clearly $f$ maps proper norm closed faces of $\mathcal{B}_{X}$ to proper norm closed faces of $\mathcal{B}_{Y}$ and preserves the order given by the natural inclusion. In this particular setting, for each extreme point $e\in \mathcal{B}_{X}$, the set $\{e\}$ is a minimal norm-semi-exposed face of $\mathcal{B}_{X}$ and hence $\{f(e)\} = f(\{e\})$ must be a minimal norm closed face of $\mathcal{B}_{Y}$, and thus $f(e)\in \partial_e (\mathcal{B}_Y)$. All these facts are stated in the next corollary.

\begin{corollary}\label{c for spaces with property of semi-exposition for faces} Let $X$ and $Y$ be Banach spaces satisfying the following two properties \begin{enumerate}[$(1)$]\item Every norm closed face of $\mathcal{B}_X$ {\rm(}respectively, of $\mathcal{B}_Y${\rm)} is norm-semi-exposed;
\item Every weak$^*$ closed proper face of $\mathcal{B}_{X^*}$ {\rm(}respectively, of $\mathcal{B}_{Y^*}${\rm)} is weak$^*$-semi-exposed.
\end{enumerate} Let $f: S(X) \to S(Y)$ be a surjective isometry. The following statements hold:
\begin{enumerate}[$(a)$]\item Let $\mathcal{F}$ be a convex set in $S(X)$. Then $\mathcal{F}$ is a norm closed face of $\mathcal{B}_{X}$ if and only if $f(\mathcal{F})$ is a norm closed face of $\mathcal{B}_{Y}$;
\item Given $e\in S(X)$, we have that $e\in \partial_e (\mathcal{B}_X)$ if and only if $f(e)\in \partial_e (\mathcal{B}_Y)$.$\hfil\Box$
\end{enumerate}\end{corollary}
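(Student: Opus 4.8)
The plan is to deduce both assertions directly from Proposition \ref{t ChengDong for general faces}, using hypotheses $(1)$ and $(2)$ only to translate its conclusion, which is phrased for norm-semi-exposed faces, into the desired statements about norm closed faces and extreme points. The first observation I would record is that, under $(1)$, the norm closed faces of $\mathcal{B}_X$ coincide with its norm-semi-exposed faces (and likewise for $\mathcal{B}_Y$): every norm-semi-exposed face is by definition of the form $(F^{\prime})_{\prime}=G_{\prime}$, and such a set is a norm closed face by the basic properties of the ``pre-facear'' operation recalled above, while $(1)$ supplies precisely the reverse inclusion. Hence, for part $(a)$, it suffices to show that a convex set $\mathcal{F}\subseteq S(X)$ is norm-semi-exposed if and only if $f(\mathcal{F})$ is norm-semi-exposed.

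Next I would verify that $f$ satisfies the hypothesis of Proposition \ref{t ChengDong for general faces}. Fixing $\phi_0\in\partial_e(\mathcal{B}_{X^*})$, the singleton $\{\phi_0\}$ is weak$^*$ closed and is a proper face of $\mathcal{B}_{X^*}$ (an extreme point is a minimal face, and it is proper as soon as $X\neq\{0\}$), so by $(2)$ it is weak$^*$-semi-exposed, which is exactly the assumption demanded by the proposition. The same reasoning applied to $Y$ shows that the surjective isometry $f^{-1}:S(Y)\to S(X)$ meets the hypothesis as well. Applying the proposition to $f$ gives the implication ``$\mathcal{F}$ norm-semi-exposed $\Rightarrow$ $f(\mathcal{F})$ norm-semi-exposed'', and applying it to $f^{-1}$ with the set $f(\mathcal{F})$ gives the converse; together with the identification of the previous paragraph this establishes $(a)$.

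Finally, for $(b)$ I would specialize $(a)$ to singletons. An element $e\in S(X)$ belongs to $\partial_e(\mathcal{B}_X)$ if and only if the convex set $\{e\}$ is a norm closed face of $\mathcal{B}_X$: if $e$ is extreme then $\{e\}$ is a minimal face and is trivially norm closed, and conversely, if $\{e\}$ is a face then any decomposition $e=\tfrac12(x+y)$ with $x,y\in\mathcal{B}_X$ forces $x,y\in\{e\}$, so $e$ is extreme. Since $f(\{e\})=\{f(e)\}$, part $(a)$ applied to the convex set $\{e\}$ yields that $\{e\}$ is a norm closed face of $\mathcal{B}_X$ if and only if $\{f(e)\}$ is a norm closed face of $\mathcal{B}_Y$, that is, $e\in\partial_e(\mathcal{B}_X)$ if and only if $f(e)\in\partial_e(\mathcal{B}_Y)$. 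I do not expect any genuine obstacle here, since the substantive work resides in Proposition \ref{t ChengDong for general faces}; the only points needing care are the formal matching of norm closed with norm-semi-exposed faces, the observation that each extreme functional produces a weak$^*$-semi-exposed singleton, and the harmless exclusion of the degenerate case $X=\{0\}$.
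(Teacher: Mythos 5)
Your proposal is correct and follows essentially the same route as the paper, which derives the corollary from Proposition \ref{t ChengDong for general faces} by noting that hypothesis $(2)$ makes each extreme point of $\mathcal{B}_{X^*}$ a weak$^*$-semi-exposed singleton, that hypothesis $(1)$ identifies norm closed faces with norm-semi-exposed faces, and that extreme points correspond to singleton (minimal) faces. Your explicit application of the proposition to $f^{-1}$ to obtain the converse implication is a small but welcome point of extra care over the paper's terser presentation.
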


We have already commented that Corollary \ref{c for spaces with property of semi-exposition for faces} holds when $X$ and $Y$ are von Neumann algebras, or predual spaces of von Neumann algebras, or more generally, JBW$^*$-triples or predual spaces of JBW$^*$-triples. It is well known that $C_1(H)$ is the predual of $B(H)$.

\begin{proposition}\label{p minimal in arbitrary dimension} Let $f:S(C_1(H))\to S(C_1(H'))$ be a surjective isometry, where $H$ and $H'$ are complex Hilbert spaces. Then the following statements hold:\begin{enumerate}[$(a)$]\item A subset $\mathcal{F}\subset S(C_1(H))$ is a proper norm-closed face of $\mathcal{B}_{C_1(H)}$ if and only if $f(\mathcal{F})$ is.
\item $f$ maps $\partial_e(\mathcal{B}_{C_1(H)})$ into $\partial_e(\mathcal{B}_{C_1(H')})$;
\item dim$(H)=$dim$(H')$.
\item For each $e_0\in \partial_e(\mathcal{B}_{C_1(H)})$ we have $f(i e_0 ) = i f(e_0)$ or $f(i e_0 ) = -i f(e_0)$;
\item For each $e_0\in \partial_e(\mathcal{B}_{C_1(H)})$ if  $f(i e_0 ) = i f(e_0)$ {\rm(}respectively, $f(i e_0 ) = -i f(e_0)${\rm)} then  $f(\lambda e_0 ) = \lambda f(e_0)$ {\rm(}respectively, $f(\lambda e_0 ) = \overline{\lambda} f(e_0)${\rm)} for every $\lambda\in \mathbb{C}$ with $|\lambda|=1$.
\end{enumerate}

\end{proposition}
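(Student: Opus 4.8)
Parts $(a)$ and $(b)$ require essentially no new work: since $C_1(H)$ is the predual of the von Neumann algebra $B(H)$, both $X=C_1(H)$ and $Y=C_1(H')$ satisfy hypotheses $(1)$ and $(2)$ of Corollary \ref{c for spaces with property of semi-exposition for faces}, and parts $(a)$ and $(b)$ of that corollary give at once that $f$ sends proper norm closed faces of $\mathcal{B}_{C_1(H)}$ to proper norm closed faces of $\mathcal{B}_{C_1(H')}$, and that $e\in\partial_e(\mathcal{B}_{C_1(H)})$ if and only if $f(e)\in\partial_e(\mathcal{B}_{C_1(H')})$; recall that the extreme points are exactly the pure atoms. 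For $(c)$ I would combine this with Lemma \ref{l preservation of orthogonality}. If $\{e_i\}_{i\in I}$ is a maximal family of mutually orthogonal pure atoms in $C_1(H)$, then $\{f(e_i)\}_{i\in I}$ is a family of mutually orthogonal pure atoms in $C_1(H')$ of the same cardinality (by injectivity of $f$ and preservation of orthogonality and of pure atoms); running the same argument for $f^{-1}$ shows it is maximal. Since $\dim(H)$ is the cardinal of the largest such family, $\dim(H)=\dim(H')$.

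The heart of $(d)$ and $(e)$ is a geometric observation about pure atoms which I would isolate first: two pure atoms $a=\eta'\otimes\xi'$ and $b=\beta'\otimes\alpha'$ in $C_1(H')$ admit exactly the same orthogonal pure atoms if and only if $\mathbb{C}\xi'=\mathbb{C}\alpha'$ and $\mathbb{C}\eta'=\mathbb{C}\beta'$, i.e. if and only if $b=\mu a$ for some $\mu\in\mathbb{T}$. The ``if'' part is a direct computation using that $\beta\otimes\alpha\perp \eta'\otimes\xi'$ exactly when $\alpha\perp\xi'$ and $\beta\perp\eta'$; for the ``only if'' part I would argue contrapositively, producing, whenever (say) $\mathbb{C}\xi'\neq\mathbb{C}\alpha'$, a unit vector $u\perp\xi'$ with $u\not\perp\alpha'$ and a unit vector $v\perp\eta'$, so that $v\otimes u$ is orthogonal to $a$ but not to $b$ (the low dimensional cases being trivial). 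Granting this, fix $e_0=\eta\otimes\xi\in\partial_e(\mathcal{B}_{C_1(H)})$ and $\lambda\in\mathbb{T}$. As $\lambda e_0=(\lambda\eta)\otimes\xi$ has exactly the same orthogonal pure atoms as $e_0$, Lemma \ref{l preservation of orthogonality} applied to $f$ and to $f^{-1}$ shows that $f(\lambda e_0)$ and $f(e_0)$ have exactly the same orthogonal pure atoms; by the observation there is $\nu(\lambda)\in\mathbb{T}$ with $f(\lambda e_0)=\nu(\lambda)f(e_0)$, and $\nu(1)=1$.

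Since $f$ is an isometry and $\|\lambda e_0-\lambda'e_0\|_1=|\lambda-\lambda'|$, we obtain $|\nu(\lambda)-\nu(\lambda')|=|\lambda-\lambda'|$ for all $\lambda,\lambda'\in\mathbb{T}$; thus $\nu$ is a distance preserving self-map of the unit circle fixing $1$. For $(d)$, taking $\lambda=i$, the equality $|\nu(i)-1|=\|f(ie_0)-f(e_0)\|_1=\|ie_0-e_0\|_1=\sqrt{2}$ together with $|\nu(i)|=1$ forces $\mathrm{Re}(\nu(i))=0$, hence $\nu(i)=\pm i$, which is exactly $f(ie_0)=\pm i f(e_0)$. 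For $(e)$ I would invoke the rigidity of such maps: if $\nu(i)=i$, then for each $\lambda$ the point $\nu(\lambda)\in\mathbb{T}$ lies at distance $|\lambda-1|$ from $1$ and at distance $|\lambda-i|$ from $i$, and $\lambda$ is the only point of $\mathbb{T}$ with these two distances (the second intersection of the corresponding circles is the reflection of $\lambda$ across the line through $1$ and $i$, a chord of $\mathbb{T}$ not passing through its centre, so it leaves $\mathbb{T}$ unless $\lambda\in\{1,i\}$); hence $\nu(\lambda)=\lambda$ and $f(\lambda e_0)=\lambda f(e_0)$. If instead $\nu(i)=-i$, the same argument with reference points $1$ and $-i$, together with $|\bar\lambda-1|=|\lambda-1|$ and $|\bar\lambda+i|=|\lambda-i|$, gives $\nu(\lambda)=\bar\lambda$ and $f(\lambda e_0)=\bar\lambda f(e_0)$.

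The step I expect to be the real obstacle is the geometric observation identifying the complex line of a pure atom with its class of orthogonal pure atoms: this is where the rank one structure of the extreme points and a small low dimensional case analysis genuinely enter, and it is what upgrades the purely set theoretic content of Lemma \ref{l preservation of orthogonality} into rigid scalar behaviour of $f$ along each complex line. By comparison, the circle rigidity used for $(e)$ is routine planar geometry once $\nu(1)$ and $\nu(i)$ are known.
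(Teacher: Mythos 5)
Your proposal is correct and follows essentially the same route as the paper: parts $(a)$--$(c)$ are handled identically via Corollary \ref{c for spaces with property of semi-exposition for faces} and Lemma \ref{l preservation of orthogonality}, and for $(d)$--$(e)$ the paper likewise first identifies $f(\mathbb{T}e_0)=\mathbb{T}f(e_0)$ through orthogonality (writing $\mathbb{T}e_0=\{e_0\}^{\perp\perp}\cap S(C_1(H))$ where you characterize the class $\mathbb{T}e_0$ by its orthogonal pure atoms) and then pins down the scalar with the same distance computations to $e_0$ and $ie_0$. The only cosmetic difference is that you prove in detail the ``same orthocomplement implies unimodular multiple'' step, which the paper asserts without proof.
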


\begin{proof} $(a)$ and $(b)$ are consequences of Corollary \ref{c for spaces with property of semi-exposition for faces}. Having in mind that the dimension of $H$ is precisely the cardinal of the biggest set of mutually orthogonal pure atoms in $C_1(H)$, statement $(c)$ follows from $(b)$ and Lemma \ref{l preservation of orthogonality}.\smallskip

$(d)$ Let $e_0\in \partial_e(\mathcal{B}_{C_1(H)})$. Let us set $\{e_0\}^{\perp} := \{x\in C_1(H): x\perp e_0 \}$.  By Lemma \ref{l preservation of orthogonality} we have $f\left(\{e_0\}^{\perp}\cap S(C_1(H))\right) = \{f(e_0)\}^{\perp}\cap S(C_1(H'))$ and $$f\left(\mathbb{T} e_0\right) =f\left(\{e_0\}^{\perp\perp}\cap S(C_1(H))\right) = \{f(e_0)\}^{\perp\perp}\cap S(C_1(H'))= \mathbb{T} f(e_0).$$ Therefore $f(i e_0) = \mu f(e_0)$ for a suitable $\mu \in \mathbb{T}$. Since $$|1-\mu|=\|f(e_0) -f(i e_0)\|_1 = \|e_0 - i e_0\|_1 = |1-i| = \sqrt{2},$$ we deduce that $\mu\in \{\pm i\}$, as desired.\smallskip

$(e)$ Suppose $e_0\in \partial_e(\mathcal{B}_{C_1(H)})$ and $f(i e_0 ) = i f(e_0)$. Let $\lambda\in \mathbb{T}$. Arguing as above, we prove that $f(\lambda e_0) = \mu f(e_0)$ for a suitable $\mu \in \mathbb{T}$. The identities $$|1-\mu|=\|f(e_0) -f(\lambda e_0)\|_1 = \|e_0 - \lambda e_0\|_1 = |1-\lambda| ,$$ and $$|i-\mu|=\|i f(e_0) -f(\lambda e_0)\|_1 =\|f(i e_0) -f(\lambda e_0)\|_1 = \|i e_0 - \lambda e_0\|_1 = |i-\lambda|,$$ prove that $\mu =\lambda$.
\end{proof}

Whenever we have a surjective isometry $f: S(C_1(H))\to S(C_1(H'))$, we deduce from the above proposition that $H$ and $H'$ are isometrically isomorphic, we can therefore restrict our study to the case in which $H=H'$.\smallskip

We complete this section by recalling a result established by C.M. Edwards and G.T. Ruttimann in \cite{EdRutt88} (later rediscovered in \cite{AkPed92}). More concretely, as a consequence of the result proved by C.M. Edwards and G.T. R\"{u}ttimann in \cite[Theorem 5.3]{EdRutt88}, we know that every proper norm-closed face $\mathcal{F}$ of $\mathcal{B}_{C_1(H)}$ is of the form \begin{equation}\label{eq general form of a face} \mathcal{F} =\{w\}_{_{\prime}} =\{x\in C_1(H)=B(H)_* : \|x\|_1=1 = x(w) \},
\end{equation} for a unique partial isometry $w\in B(H)$. Furthermore the mapping $w\mapsto \{w\}_{_\prime}$ is an order preserving bijection between the lattices of all partial isometries in $B(H)$ and all norm closed faces of $\mathcal{B}_{C_1(H)}.$\smallskip

If $H$ is a finite dimensional complex Hilbert space, a maximal (or complete) partial isometry $w\in B(H)$ is precisely a unitary element. Therefore by the just commented result (\cite[Theorem 5.3]{EdRutt88}, see also \cite[Theorem 4.6]{AkPed92}) every maximal proper (norm-closed) face $\mathcal{M}$ of $\mathcal{B}_{C_1(H)}$ is of the form \begin{equation}\label{eq general form of a face finite dimension}\mathcal{M} =\{u\}_{_{\prime}} =\{x\in C_1(H)=B(H)_* : \|x\|_1=1 = x(u) \},\end{equation} for a unique unitary element $u\in B(H)$.

\section{Surjective isometries between the unit spheres of two finite dimensional trace class spaces}\label{sec:3}

In this section we present a positive solution to Tingley's problem for surjective isometries $f:S(C_1(H))\to S(C_1(H))$, in the case in which $H$ is a finite dimensional complex Hilbert space.\smallskip

Our next result is a first step towards a solution to Tingley's conjecture in $M_2(\mathbb{C})$ when the latter is equipped with the trace norm.

\begin{proposition}\label{p affine real combinations dim 2} Let $f: S(C_1(H))\to S(C_1(H))$ be a surjective isometry, where $H$ is a two dimensional complex Hilbert space. Suppose $e_1, e_2$ is a (maximal) set of mutually orthogonal pure atoms in $S(C_1(H))$ and $\lambda_1,\lambda_2$ are real numbers satisfying $|\lambda_1|+|\lambda_2|=1$. Then $$f(\lambda_1 e_1 + \lambda_2 e_2) = \lambda_1 f(e_1) + \lambda_2 f(e_2).$$
\end{proposition}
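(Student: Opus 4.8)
The plan is to reduce the identity, via the facial-stability results of Section~\ref{sec:2}, to a computation inside a single maximal face of $\mathcal{B}_{C_1(H)}$, which in dimension two is affinely and isometrically the set of $2\times2$ density matrices; inside that face two trace-distance constraints will pin $f(x)$ down.

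First I would dispose of the signs. If one of the $\lambda_j$ vanishes the statement is Proposition~\ref{p minimal in arbitrary dimension}$(e)$ (take $\lambda=\pm1$, for which $\overline{\lambda}=\lambda$), so assume $\lambda_1\lambda_2\neq0$. Since $f(-e_j)=-f(e_j)$ by Proposition~\ref{p minimal in arbitrary dimension}$(e)$ (with $\lambda=-1$, which gives $-f(e_j)$ in both the linear and the conjugate-linear case), replacing $e_j$ by the pure atom $-e_j$ whenever $\lambda_j<0$ reduces everything to the case $\lambda_1,\lambda_2>0$, so that $\lambda_1+\lambda_2=1$. Writing $x=\lambda_1e_1+\lambda_2e_2$ and $e_j=\eta_j\otimes\xi_j$, orthogonality of $e_1,e_2$ forces $\{\eta_1,\eta_2\}$ and $\{\xi_1,\xi_2\}$ to be orthonormal bases of $H$, and since $e_1\perp e_2$ I get $\|x-e_1\|_1=\|{-\lambda_2e_1+\lambda_2e_2}\|_1=2\lambda_2$ and likewise $\|x-e_2\|_1=2\lambda_1$.

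Then I would locate $x$ facially. The operator $u=\xi_1\otimes\eta_1+\xi_2\otimes\eta_2\in B(H)$ is unitary and satisfies $e_1(u)=e_2(u)=1$, hence $x(u)=\lambda_1+\lambda_2=1=\|x\|_1$; by \eqref{eq general form of a face finite dimension} the maximal proper face $\mathcal{M}=\{u\}_{\prime}$ contains $x,e_1,e_2$, and $e_1,e_2$, being extreme points of $\mathcal{B}_{C_1(H)}$, are extreme points of $\mathcal{M}$. By Proposition~\ref{p minimal in arbitrary dimension}$(a)$, $f(\mathcal{M})$ is a maximal proper face of $\mathcal{B}_{C_1(H)}$, so $f(\mathcal{M})=\{u'\}_{\prime}$ for some unitary $u'\in B(H)$ again by \eqref{eq general form of a face finite dimension}; moreover $f(x)\in f(\mathcal{M})$, and by Proposition~\ref{p minimal in arbitrary dimension}$(b)$ together with Lemma~\ref{l preservation of orthogonality} the points $p:=f(e_1)$ and $q:=f(e_2)$ are mutually orthogonal extreme points of $f(\mathcal{M})$, with $\|f(x)-p\|_1=2\lambda_2$ and $\|f(x)-q\|_1=2\lambda_1$ because $f$ is an isometry. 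Now the right multiplication $T(a)=au'$ is a surjective trace-norm isometry of $C_1(H)$ carrying $\{u'\}_{\prime}$ affinely and isometrically onto the face $\{1\}_{\prime}=\{\rho\in C_1(H):\rho\geq0,\ \mathrm{tr}(\rho)=1\}$ of density matrices and preserving orthogonality of extreme points; composing $T$ with a suitable unitary conjugation $a\mapsto vav^*$ (again a trace-norm isometry fixing $\{1\}_{\prime}$) I may assume $T(p)=\mathrm{diag}(1,0)$ and $T(q)=\mathrm{diag}(0,1)$, while $\rho:=T(f(x))$ is a density matrix $\begin{pmatrix} a & b\\ \bar b & 1-a\end{pmatrix}$ with $a\in[0,1]$. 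Since $\rho-\mathrm{diag}(1,0)$ and $\rho-\mathrm{diag}(0,1)$ are Hermitian of trace zero, their trace norms are $2\sqrt{(1-a)^2+|b|^2}$ and $2\sqrt{a^2+|b|^2}$; equating these to $2\lambda_2$ and $2\lambda_1$ and subtracting the resulting identities gives $1-2a=\lambda_2-\lambda_1$, i.e.\ $a=\lambda_1$, whence $|b|^2=\lambda_1^2-a^2=0$. Thus $\rho=\mathrm{diag}(\lambda_1,\lambda_2)=\lambda_1T(p)+\lambda_2T(q)$, and applying $T^{-1}$ yields $f(x)=\lambda_1p+\lambda_2q=\lambda_1f(e_1)+\lambda_2f(e_2)$.

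The crux is conceptual rather than computational: the two trace-distance equations do not determine a point of the whole sphere $S(C_1(H))$, and it is exactly the facial-stability results that confine $f(x)$ to the maximal face $f(\mathcal{M})$, whose two-dimensional density-matrix (Bloch-ball) geometry then rigidifies the coefficients. The step requiring the most care is checking that this geometry is genuinely transported through $f$ and through the normalizing isometry $T$ -- that orthogonal extreme points go to orthogonal (antipodal) pure states and that the affine and metric structure of the face is preserved -- so that the elementary $2\times2$ computation is legitimate.
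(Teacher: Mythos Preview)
Your argument is correct and follows essentially the same strategy as the paper's proof: both reduce via the facial-stability results of Section~\ref{sec:2} to showing that $f(x)$ lies in a maximal face identifiable with the $2\times2$ density matrices, and then use the two trace-distance constraints $\|f(x)-f(e_j)\|_1=2\lambda_{3-j}$ to force $f(x)=\lambda_1 f(e_1)+\lambda_2 f(e_2)$. The only cosmetic difference is that the paper normalizes by pre- and post-composing $f$ with linear isometries so that $g(\phi_j)=\phi_j$ before invoking the face $\{1\}_{\prime}$, whereas you keep $f$ and normalize only the target face via $T(a)=au'$ followed by a unitary conjugation; the resulting $2\times2$ computation is identical.
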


\begin{proof} Under these assumptions $B(H)$ is $M_2(\mathbb{C})$ with the spectral or operator norm, and $C_1(H)$ is $M_2(\mathbb{C})$ with the trace norm. We can assume the existence of orthonormal basis of $H$ $\{\eta_1,\eta_2\},$ $\{\xi_1,\xi_2\}$, $\{\widetilde{\eta}_1, \widetilde{\eta}_2\}$ and $\{\widetilde{\xi}_1, \widetilde{\xi}_2\}$ such that $e_j = \eta_j\otimes \xi_j$ and $f(\eta_j\otimes \xi_j) = \widetilde{\eta}_j\otimes \widetilde{\xi}_j$ for every $j=1,2$ (compare Proposition \ref{p minimal in arbitrary dimension}$(b)$ and Lemma \ref{l preservation of orthogonality}).\smallskip

Proposition \ref{p minimal in arbitrary dimension}($(d)$ and $(e)$) shows that the desired statement is true when $\lambda_1 \lambda_2=0$.\smallskip

To simplify the notation, let $u_1$, $u_2$ and $v_2^*$ be the unitaries in $B(H)$ mapping the basis $\{\eta_1,\eta_2\},$ $\{\widetilde{\eta}_1, \widetilde{\eta}_2\}$ and $\{\widetilde{\xi}_1, \widetilde{\xi}_2\}$ to the basis $\{{\xi}_1,\xi_2\},$ respectively. Let $T_1, T_2 : C_1(H)\to C_1(H)$ be the surjective complex linear isometries defined by $T_1 (x) = u_1 x$ and $T_2 (x) = u_2 x v_2$ ($x\in C_1(H)$). We set $g = T_2|_{S(C_1(H))} \circ f \circ T_1^{-1}|_{S(C_1(H))} : S(C_1(H)) \to S(C_1(H))$. Then $g$ is a surjective isometry satisfying $$g(\phi_j) = \phi_j, \hbox{  where } \phi_j=\xi_j\otimes \xi_j, \ \ \forall 1\leq j \leq 2.$$ We can chose a matricial representation (i.e. the representation on the basis $\{\xi_1,\xi_2\}$) such that $\phi_1 = \left(
                                                                                                                                  \begin{array}{cc}
                                                                                                                                    1 & 0 \\
                                                                                                                                    0 & 0 \\
                                                                                                                                  \end{array}
                                                                                                                                \right)$ and $\phi_2 = \left(
                                                                                                                                  \begin{array}{cc}
                                                                                                                                    0 & 0 \\
                                                                                                                                    0 & 1 \\
                                                                                                                                  \end{array}
                                                                                                                                \right)$.\smallskip

We recall that every maximal proper (norm-closed) face $\mathcal{M}$ of $\mathcal{B}_{C_1(H)}$ is of the form $$\mathcal{M} =\{u\}_{_{\prime}} =\{x\in C_1(H)=B(H)_* : \|x\|_1=1 = x(u) =1\},$$ for a unique unitary element $u\in B(H)$ (see \eqref{eq general form of a face finite dimension} or \cite[Theorem 5.3]{EdRutt88}).\smallskip

We note that $\phi_1, \phi_2\in \{1\}_{_{\prime}}$ and $\{1\}_{_{\prime}}$ is precisely the set of all normal states on $B(H)$.\smallskip

By Proposition \ref{p faces ChengDong11} we know that $g(\{1\}_{_{\prime}}) = \{u\}_{_{\prime}},$ for a unique unitary element $u\in B(H)$. Since $\phi_j = g(\phi_j)\in \{u\}_{_{\prime}}$ we can easily check that $u=1$.\smallskip

We shall first assume that $0\leq \lambda_j$ for every $j$. Since $\lambda_1+ \lambda_2=1$ we have $g(\lambda_1 \phi_1 + \lambda_2 \phi_2) \in g(\{1\}_{_{\prime}}) = \{1\}_{_{\prime}}$. Therefore, the element $g(\lambda_1 \phi_1 + \lambda_2 \phi_2)$ must be a positive matrix $\left(
\begin{array}{cc}
  t & c \\
   \overline{c} & 1-t \\
    \end{array}
     \right)$ with trace and trace norm equal to one (i.e., $0\leq t\leq 1$ and $\Big\| \left(
      \begin{array}{cc}
         t & c \\
          \overline{c} & 1-t \\
           \end{array} \right)\Big\|_1=1$.\smallskip

Since, by hypothesis, we have $$\left\| \left(
\begin{array}{cc}
 t & c \\
  \overline{c} & 1-t \\
     \end{array}
       \right) - \left(
         \begin{array}{cc}
             1 & 0 \\
             0 & 0\\
                 \end{array}
                    \right)
                      \right\|_1= \left\| g(\lambda_1 \phi_1 + \lambda_2 \phi_2) - g(\phi_1) \right\|_1$$ $$= \| \lambda_1 \phi_1 + \lambda_2 \phi_2 - \phi_1 \|_1 =  1 -\lambda_1 + \lambda_2 = 2 (1-\lambda_1).$$
It is easy to check that the eigenvalues of the matrix $\left(
 \begin{array}{cc}
    t-1 & c \\
            \overline{c} & 1-t \\
                 \end{array}
                    \right)^2$ are exactly $\{(1-t)^2 + |c|^2, (t-1)^2 + |c|^2\}\},$ and hence $\left\| \left(
                         \begin{array}{cc}
                                 t-1 & c \\
                                        \overline{c} & 1-t \\
                                              \end{array}
                                                   \right) \right\|_1=  2 \sqrt{(1-t)^2 + |c|^2}$. Therefore,
\begin{equation}\label{eq 1 06 12}  1-\lambda_1=  \sqrt{(1-t)^2 + |c|^2}.
\end{equation}
The equality $$2 \sqrt{t^2 + |c|^2}=\left\| \left(
 \begin{array}{cc}
   t & c \\
    \overline{c} & 1-t \\
    \end{array}
     \right) - \left(
     \begin{array}{cc}
     0 & 0 \\
     0 & 1 \\
    \end{array}
   \right)
   \right\|_1= \left\| g(\lambda_1 \phi_1 + \lambda_2 \phi_2) - g(\phi_2) \right\|_1$$ $$= \| \lambda_1 \phi_1 + \lambda_2 \phi_2 - \phi_2 \|_1 =  \lambda_1 +1- \lambda_2 =  2( 1-\lambda_2) = 2 \lambda_1,$$ implies \begin{equation}\label{eq 2 06 12} \lambda_1=  1-\lambda_2=  \sqrt{t^2 + |c|^2}.
                                                                                                                                \end{equation}
Combining \eqref{eq 1 06 12} and \eqref{eq 2 06 12} we get $\lambda_1 = t$ and $c=0$, which shows that
$$ g(\lambda_1 \phi_1 + \lambda_2 \phi_2) =  \left(
                                               \begin{array}{cc}
                                                 \lambda_1 & 0 \\
                                                 0 & \lambda_2 \\
                                               \end{array}
                                             \right) = \lambda_1 \phi_1 + \lambda_2 \phi_2 = \lambda_1 g(\phi_1) + \lambda_2 g(\phi_2),
$$ equivalently, since $g = T_2|_{S(C_1(H))} \circ f \circ T_1^{-1}|_{S(C_1(H))} $ we obtain \begin{equation}\label{eq 3 positive coef} f(\lambda_1 e_1 + \lambda_2 e_2) = \lambda_1 f(e_1) + \lambda_2 f(e_2),
\end{equation} for every $\lambda_1,\lambda_2\geq 0$, with $\lambda_1+\lambda_2=1$ and every set $\{e_1, e_2\}$ of mutually orthogonal rank one elements in $S(C_1(H))$.\smallskip

Finally, suppose $\lambda_1,\lambda_2\in \mathbb{R}$, with $|\lambda_1|+|\lambda_2|=1$. Set $\sigma_i \in \{\pm 1\}$ such that $\lambda_i = \sigma_i |\lambda_i|$. Given and arbitrary set $\{e_1, e_2\}$ of mutually orthogonal rank one elements in $S(C_1(H))$, the set $\{\sigma_1 e_1, \sigma_2 e_2\}$ satisfies the same properties. It follows from \eqref{eq 3 positive coef} that $$f(\lambda_1 e_1 + \lambda_2 e_2)= f(|\lambda_1| \sigma_1 e_1 + |\lambda_2| \sigma_2 e_2) = |\lambda_1| f(\sigma_1 e_1) + |\lambda_2| f(\sigma_2 e_2) $$ $$=\hbox{(by Proposition \ref{p minimal in arbitrary dimension}$(d)$ and $(e)$)} = |\lambda_1| \sigma_1 f( e_1) + |\lambda_2| \sigma_2 f( e_2) = \lambda_1 f( e_1) + \lambda_2 f( e_2).$$
\end{proof}

\begin{corollary}\label{c complex linearity and antilinearity} Let $f: S(C_1(H))\to S(C_1(H))$ be a surjective isometry, where $H$ is a two dimensional complex Hilbert space. Suppose $e_1, e_2$ is a (maximal) set of mutually orthogonal rank one elements in $S(C_1(H))$. The following statements hold:
\begin{enumerate}[$(a)$] \item If $f(i e_1) = i f(e_1)$ then $f(i e_2) = i f(e_2)$;
\item $f(i e_1) = - i f(e_1)$ then $f(i e_2) = - i f(e_2)$;
\end{enumerate}
\end{corollary}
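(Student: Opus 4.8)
The plan is to argue by contradiction, reducing to a normalized situation and then exhibiting an incompatibility between two ways of evaluating $f$ at the ``fully mixed'' point $\tfrac{i}{2}(e_1+e_2)$. First I would normalize: composing $f$ on both sides with surjective complex linear isometries of $C_1(H)$ (each of the form $x\mapsto uxv$, which commute with multiplication by $i$, preserve orthogonality, and carry pure atoms to pure atoms), I may assume $e_1=\phi_1$, $e_2=\phi_2$ are the diagonal rank one projections and $f(\phi_1)=\phi_1$, $f(\phi_2)=\phi_2$; such conjugations do not alter which of the two alternatives in Proposition~\ref{p minimal in arbitrary dimension}$(d)$ holds for $e_1$ or for $e_2$. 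To prove $(a)$ I assume $f(i\phi_1)=i\phi_1$ and, seeking a contradiction, $f(i\phi_2)=-i\phi_2$ (the only remaining possibility for $\phi_2$ by Proposition~\ref{p minimal in arbitrary dimension}$(d)$). Statement $(b)$ then follows by applying $(a)$ to the ordered pair $(e_2,e_1)$.

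The decisive idea is to leave the diagonal algebra and introduce the off-diagonal orthonormal frame $\psi_\pm=\tfrac{1}{\sqrt2}(\xi_1\pm\xi_2)$ together with the mutually orthogonal pure atoms $p=\psi_+\otimes\psi_+$ and $q=\psi_-\otimes\psi_-$, so that $p+q=1$. Applying Proposition~\ref{p affine real combinations dim 2} to the pair $\{p,q\}$ and to the pair $\{\phi_1,\phi_2\}$ gives $\tfrac12 f(p)+\tfrac12 f(q)=f(\tfrac12\,1)=\tfrac12\phi_1+\tfrac12\phi_2=\tfrac12\,1$, whence $f(p)+f(q)=1$. Since $f(p),f(q)$ are orthogonal pure atoms (Proposition~\ref{p minimal in arbitrary dimension}$(b)$ together with Lemma~\ref{l preservation of orthogonality}) whose sum is the identity, writing $f(p)=\eta\otimes\xi$, $f(q)=\eta'\otimes\xi'$ and evaluating $f(p)+f(q)=1$ on $\xi$ and on $\xi'$ forces $\eta=\xi$, $\eta'=\xi'$; thus $f(p)$ and $f(q)$ are complementary rank one \emph{projections}. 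Moreover $\|p-\phi_j\|_1=\sqrt2$ for $j=1,2$, so the isometry $f$ forces $\|f(p)-\phi_j\|_1=\sqrt2$, and for a rank one projection this pins down the $(1,1)$ and $(2,2)$ diagonal entries of $f(p)$ to coincide (both equal to $\tfrac12$); the same holds for $f(q)$.

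Finally I would evaluate $f$ at $\tfrac{i}{2}\,1=\tfrac{i}{2}(\phi_1+\phi_2)$ in two ways. Using the pair $\{i\phi_1,i\phi_2\}$ and the assumed values $f(i\phi_1)=i\phi_1$, $f(i\phi_2)=-i\phi_2$, Proposition~\ref{p affine real combinations dim 2} yields $f(\tfrac{i}{2}\,1)=\tfrac{i}{2}(\phi_1-\phi_2)$, whose diagonal entries $(\tfrac{i}{2},-\tfrac{i}{2})$ are unequal. On the other hand $\tfrac{i}{2}\,1=\tfrac12(ip)+\tfrac12(iq)$, and since $ip,iq$ are again orthogonal pure atoms with $f(ip)=\varepsilon_p\, i f(p)$, $f(iq)=\varepsilon_q\, i f(q)$ for some signs $\varepsilon_p,\varepsilon_q\in\{\pm1\}$ by Proposition~\ref{p minimal in arbitrary dimension}$(d)$, the same proposition gives $f(\tfrac{i}{2}\,1)=\tfrac{i}{2}\big(\varepsilon_p f(p)+\varepsilon_q f(q)\big)$. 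As $f(p),f(q)$ both have diagonal $(\tfrac12,\tfrac12)$, the matrix $\varepsilon_p f(p)+\varepsilon_q f(q)$ has \emph{equal} diagonal entries, so the two expressions for $f(\tfrac{i}{2}\,1)$ are incompatible. This contradiction proves $f(i\phi_2)=i\phi_2$, that is, $f(ie_2)=if(e_2)$.

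The main obstacle, and the reason the argument cannot stay inside the diagonal algebra spanned by $e_1,e_2$, is that on diagonal operators the hypothetically ``mixed'' map is itself a genuine trace-norm isometry: conjugating the second coordinate leaves invariant every singular value (the moduli of the entries), so no purely diagonal comparison can ever detect the sign clash. One is therefore forced to test $f$ against the off-diagonal projections $p,q$; the delicate point is that the individual images $f(p),f(q)$ are determined only up to a free phase on their off-diagonal entry, so the proof must be arranged so that only the phase-independent diagonal entries of $f(p),f(q)$ enter the final comparison, which is precisely what the evaluation at $\tfrac{i}{2}\,1$ accomplishes.
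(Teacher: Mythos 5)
Your argument is correct, and it rests on the same skeleton as the paper's proof: both exploit the two orthogonal decompositions $\tfrac12(e_1+e_2)=\tfrac12 e_1+\tfrac12 e_2=\tfrac12 p+\tfrac12 q$ (your $p,q$ are exactly the paper's auxiliary pure atoms $x=\tfrac12(e_1+e_2+u_1+u_2)$ and $y=\tfrac12(e_1+e_2-u_1-u_2)$) together with Proposition \ref{p affine real combinations dim 2} applied at the point $\tfrac{i}{2}(e_1+e_2)$. Where you diverge is in how the sign ambiguity $f(ip)=\pm i f(p)$ coming from Proposition \ref{p minimal in arbitrary dimension}$(d)$ is handled. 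The paper resolves it directly: it rules out $f(ix)=-if(x)$ by comparing $\|ix-ie_1\|_1=\sqrt{2}$ with $\|x+e_1\|_1=\tfrac12\left(\sqrt{6+4\sqrt{2}}+\sqrt{6-4\sqrt{2}}\right)$, using the hypothesis $f(ie_1)=if(e_1)$, and then reads off the conclusion with no case analysis. You instead leave the signs $\varepsilon_p,\varepsilon_q$ undetermined, run a proof by contradiction, and observe that $\varepsilon_p f(p)+\varepsilon_q f(q)$ always has equal diagonal entries while the hypothetical mixed behaviour forces the unequal diagonal $(\tfrac{i}{2},-\tfrac{i}{2})$; this costs you the extra (correct) steps of showing $f(p)+f(q)=1$, that $f(p)$ and $f(q)$ are complementary rank one projections, and that their diagonals are $(\tfrac12,\tfrac12)$. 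Both routes are sound: the paper's is shorter because it never needs to compute anything about $f(x)$ or $f(y)$, while yours only ever uses phase-independent data about $f(p)$ and $f(q)$, which is the robustness you correctly identify as the delicate point.
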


\begin{proof}$(a)$ Suppose $f(i e_1) = i f(e_1)$. Let us find two orthonormal basis $\{\eta_1,\eta_2\},$ $\{\xi_1,\xi_2\}$ such that $e_1 = \eta_1\otimes \xi_1$ and $e_2= \eta_2\otimes \xi_2$. We set $u_1 = \eta_1\otimes \xi_2$ and $u_2= \eta_2\otimes \xi_1$. The elements $x = \frac12(e_1+e_2+u_1+u_2)$ and $y = \frac12(e_1+e_2-u_1-u_2)$ are rank one elements in $S(C_1(H))$.\smallskip

By Proposition \ref{p minimal in arbitrary dimension}$(d)$, $f(i x)= \pm i f(x)$ and $f(iy) =\pm i f(y)$.  If $f(i x)= - i f(x)$ we have $$\left\| f(i x) - f(i e_1)\right\|_1 = \left\| i x - i e_1\right\|_1 = \left\|  \left(
                                                                                              \begin{array}{cc}
                                                                                                -\frac12 & \frac12 \\
                                                                                                \frac12 & \frac12 \\
                                                                                              \end{array}
                                                                                            \right)
\right\|_1= \frac12 \left\|  \left(
                                                                                              \begin{array}{cc}
                                                                                                -1 & 1 \\
                                                                                                1 & 1 \\
                                                                                              \end{array}
                                                                                            \right)
\right\|_1= {\sqrt{2}}$$ and $$\left\| f(i x) - f(i e_1)\right\|_1 = \left\|-i f( x) - i f( e_1)\right\|_1= \left\| f(x) + f( e_1)\right\|_1 $$ $$= \left\| x + e_1\right\|_1= \left\|  \left(
                                                                                              \begin{array}{cc}
                                                                                                \frac32 & \frac12 \\
                                                                                                \frac12 & \frac12 \\
                                                                                              \end{array}
                                                                                            \right)
\right\|_1 = \frac12 \left\|  \left(
                                                                                              \begin{array}{cc}
                                                                                                3 & 1 \\
                                                                                                1 & 1 \\
                                                                                              \end{array}
                                                                                            \right)
\right\|_1= \frac12 \left(\sqrt{6+4\sqrt{2}}+ \sqrt{6-4\sqrt{2}}\right),$$ which gives a contradiction. Therefore, $f(i x)= i f(x)$. We similarly prove $f(iy) = i f(y)$.\smallskip

Since $i x\perp i y$ in $S(C_1(H))$, we can apply Proposition \ref{p affine real combinations dim 2} to deduce that $$f\left(\frac{i}{2} e_1+ \frac{i}{2} e_2 \right) = f\left(\frac{i}{2} x+ \frac{i}{2} y \right) = \frac12 f( i x) + \frac12 f(i y ) =  i \frac12 f(  x) + i \frac12 f( y )$$ $$ = i  f\left(\frac{1}{2} x+ \frac{1}{2} y \right)  = i f\left(\frac{1}{2} e_1+ \frac{1}{2} e_2 \right).$$ Since a new application of Proposition \ref{p affine real combinations dim 2} gives $f\left(\frac{1}{2} e_1+ \frac{1}{2} e_2 \right) = \frac{1}{2} f\left( e_1\right) + \frac{1}{2} f\left(e_2 \right)$ and $f\left(\frac{i}{2} e_1+ \frac{i}{2} e_2 \right) = \frac{1}{2} f\left(i e_1\right) + \frac{1}{2} f\left(i e_2 \right)$, the equality $f(i e_1) = i f(e_1)$ proves that $f(i e_2) = i f(e_2)$.\smallskip

We can actually prove that $f(i u_1) = i f(u_1)$ and $f(iu_2) = i f(u_2)$.\smallskip

Statement $(b)$ follows by similar arguments.
\end{proof}

Let $f: S(C_1(H))\to S(C_1(H))$ be a surjective isometry, where $H$ is a two dimensional complex Hilbert space. Suppose $e_1, e_2$ is a (maximal) set of mutually orthogonal rank one elements in $S(C_1(H))$ and $\lambda_1,\lambda_2$ are real numbers satisfying $|\lambda_1|+|\lambda_2|=1$. Then we have proved in Proposition \ref{p affine real combinations dim 2} that $$f(\lambda_1 e_1 + \lambda_2 e_2) = \lambda_1 f(e_1) + \lambda_2 f(e_2).$$

Arguing as in the proof of Proposition \ref{p affine real combinations dim 2} we can find two surjective complex linear isometries $T_1,T_2 : S(C_1(H))\to S(C_1(H))$ such that $g = T_2|_{S(C_1(H))} \circ f \circ T_1^{-1}|_{S(C_1(H))} : S(C_1(H)) \to S(C_1(H))$ is a surjective isometry satisfying $g(\phi_j) = \phi_j,$ where $\phi_1=\left(
                                                          \begin{array}{cc}
                                                            1 & 0 \\
                                                            0 & 0 \\
                                                          \end{array}
                                                        \right)$ and $\phi_2=\left(
                                                          \begin{array}{cc}
                                                            0 & 0 \\
                                                            0 & 1 \\
                                                          \end{array}
                                                        \right).$ Now, we claim that
\begin{equation}\label{eq counterdiagonal} g\left( \left(
                                                          \begin{array}{cc}
                                                            0 & 1 \\
                                                            0 & 0 \\
                                                          \end{array}
                                                        \right) \right) =  \left(
                                                          \begin{array}{cc}
                                                            0 & \mu \\
                                                            0 & 0 \\
                                                          \end{array}
                                                        \right)  \hbox{ and } g\left( \left(
                                                          \begin{array}{cc}
                                                            0 & 0 \\
                                                            1 & 0 \\
                                                          \end{array}
                                                        \right) \right) =  \left(
                                                          \begin{array}{cc}
                                                            0 & 0 \\
                                                            \overline{\mu} & 0 \\
                                                          \end{array}
                                                        \right)
\end{equation} or
\begin{equation}\label{eq counterdiagonal transpose} g\left( \left(
                                                          \begin{array}{cc}
                                                            0 & 1 \\
                                                            0 & 0 \\
                                                          \end{array}
                                                        \right) \right) =  \left(
                                                          \begin{array}{cc}
                                                            0 & 0 \\
                                                            \mu & 0 \\
                                                          \end{array}
                                                        \right)  \hbox{ and } g\left( \left(
                                                          \begin{array}{cc}
                                                            0 & 0 \\
                                                            1 & 0 \\
                                                          \end{array}
                                                        \right) \right) =  \left(
                                                          \begin{array}{cc}
                                                            0 & \overline{\mu} \\
                                                            0 & 0 \\
                                                          \end{array}
                                                        \right)
\end{equation} for suitable $\mu\in \mathbb{T}$. Indeed, we know from previous arguments that $g\left( \left(
                                                          \begin{array}{cc}
                                                            0 & 1 \\
                                                            0 & 0 \\
                                                          \end{array}
                                                        \right) \right)$ and $g\left( \left(
                                                          \begin{array}{cc}
                                                            0 & 0 \\
                                                            1 & 0 \\
                                                          \end{array}
                                                        \right) \right)$ are rank one orthogonal elements in $S(C_1(H))$. Let us write $g\left( \left(
                                                          \begin{array}{cc}
                                                            0 & 1 \\
                                                            0 & 0 \\
                                                          \end{array}
                                                        \right) \right) = \eta\otimes \xi = \left(
                                                          \begin{array}{cc}
                                                            \alpha & \beta \\
                                                            \gamma & \delta \\
                                                          \end{array}
                                                        \right)$, with $\alpha \delta = \gamma \beta$ and $|\alpha|^2 + |\beta|^2 + |\gamma|^2 + |\delta|^2 =1$.\smallskip

By hypothesis, $$\left\| \left(
                                                          \begin{array}{cc}
                                                            \alpha & \beta \\
                                                            \gamma & \delta \\
                                                          \end{array}
                                                        \right)\pm \left(
                                                          \begin{array}{cc}
                                                            1 & 0 \\
                                                            0 & 0 \\
                                                          \end{array}
                                                        \right) \right\|_1 =\left\| g\left( \left(
                                                          \begin{array}{cc}
                                                            0 & 1 \\
                                                            0 & 0 \\
                                                          \end{array}
                                                        \right) \right)\pm g\left( \left(
                                                          \begin{array}{cc}
                                                            1 & 0 \\
                                                            0 & 0 \\
                                                          \end{array}
                                                        \right) \right) \right\|_1 $$ $$= \left\| \left(
                                                          \begin{array}{cc}
                                                            0 & 1 \\
                                                            0 & 0 \\
                                                          \end{array}
                                                        \right)\pm  \left(
                                                          \begin{array}{cc}
                                                            1 & 0 \\
                                                            0 & 0 \\
                                                          \end{array}
                                                        \right) \right\|_1 = \sqrt{2}.$$

Since $\left\| \left(
                                                          \begin{array}{cc}
                                                            \alpha\pm 1 & \beta \\
                                                            \gamma & \delta \\
                                                          \end{array}
                                                        \right) \right\|_1^2 = 2$, we deduce from the inequality in \eqref{ineq Arazy} that $$2 \geq |\alpha \pm 1|^2 + |\beta|^2 +|\gamma|^2 +|\delta|^2 = 1+ |\alpha|^2 \pm 2 \Re\hbox{e} (\alpha) + |\beta|^2 +|\gamma|^2 +|\delta|^2,$$ which assures that $0\geq \pm 2 \Re\hbox{e} (\alpha)$ and hence $\Re\hbox{e} (\alpha) =0$. Replacing $g\left( \left(
                                                          \begin{array}{cc}
                                                            1 & 0 \\
                                                            0 & 0 \\
                                                          \end{array}
                                                        \right)\right)$ with $g\left( \left(
                                                          \begin{array}{cc}
                                                            i & 0 \\
                                                            0 & 0 \\
                                                          \end{array}
                                                        \right)\right)$ and having in mind that $g\left( i \left(
                                                          \begin{array}{cc}
                                                            1 & 0 \\
                                                            0 & 0 \\
                                                          \end{array}
                                                        \right)\right) \in \left\{ \pm i g\left( \left(
                                                          \begin{array}{cc}
                                                            1 & 0 \\
                                                            0 & 0 \\
                                                          \end{array}
                                                        \right)\right)\right\}$, we obtain $\Im\hbox{m} (\alpha)=0$, and thus $\alpha=0$.\smallskip

Similar arguments applied to $g\left(\left(
                                                          \begin{array}{cc}
                                                            0 & 0 \\
                                                            0 & 1 \\
                                                          \end{array}
                                                        \right)\right)$ instead of $g\left(\left(
                                                          \begin{array}{cc}
                                                            1 & 0 \\
                                                            0 & 0 \\
                                                          \end{array}
                                                        \right)\right)$ prove $\delta=0$.
Since $\gamma \beta=0$ and $|\beta|^2 + |\gamma|^2 =1$ we deduce that \begin{equation}\label{eq 11 pre}g\left( \left(
                                                          \begin{array}{cc}
                                                            0 & 1 \\
                                                            0 & 0 \\
                                                          \end{array}
                                                        \right) \right) =  \left(
                                                          \begin{array}{cc}
                                                            0 & \mu \\
                                                            0 & 0 \\
                                                          \end{array}
                                                        \right)  \hbox{ and } g\left( \left(
                                                          \begin{array}{cc}
                                                            0 & 0 \\
                                                            1 & 0 \\
                                                          \end{array}
                                                        \right) \right) =  \left(
                                                          \begin{array}{cc}
                                                            0 & 0 \\
                                                            \delta & 0 \\
                                                          \end{array}
                                                        \right)
\end{equation} or
\begin{equation}\label{eq 12 pre} g\left( \left(
                                                          \begin{array}{cc}
                                                            0 & 1 \\
                                                            0 & 0 \\
                                                          \end{array}
                                                        \right) \right) =  \left(
                                                          \begin{array}{cc}
                                                            0 & 0 \\
                                                            \mu & 0 \\
                                                          \end{array}
                                                        \right)  \hbox{ and } g\left( \left(
                                                          \begin{array}{cc}
                                                            0 & 0 \\
                                                            1 & 0 \\
                                                          \end{array}
                                                        \right) \right) =  \left(
                                                          \begin{array}{cc}
                                                            0 & \delta \\
                                                            0 & 0 \\
                                                          \end{array}
                                                        \right),
\end{equation} with $\delta,\mu\in \mathbb{T}$.\smallskip

We shall show next that $\delta = \overline{\mu}$. Indeed, let us assume that $g$ satisfies \eqref{eq 11 pre}. Then, applying Proposition \ref{p affine real combinations dim 2} we have $$\left\| \left(
                                                                                                                                      \begin{array}{cc}
                                                                                                                                        \frac12 & -\frac12 \\
                                                                                                                                        -\frac12 & \frac12 \\
                                                                                                                                      \end{array}
                                                                                                                                    \right) \right\|_1=  \left\| \frac12 \left(\left(
                                                                                                                                      \begin{array}{cc}
                                                                                                                                        1 & 0 \\
                                                                                                                                        0 & 0 \\
                                                                                                                                      \end{array}
                                                                                                                                    \right)+ \left(
    \begin{array}{cc}
      0 & 0 \\
      0 & 1 \\
    \end{array}
  \right)
 \right) - \frac12\left(\left(
             \begin{array}{cc}
               0 & 1 \\
               0 & 0 \\
             \end{array}
           \right) + \left(
                       \begin{array}{cc}
                         0 & 0 \\
                         1 & 0 \\
                       \end{array}
                     \right)
           \right)
 \right\|_1 $$
 $$= \left\| f\left(\frac12 \left(\left(
 \begin{array}{cc}
 1 & 0 \\
 0 & 0 \\
 \end{array}
 \right)
+ \left(
    \begin{array}{cc}
      0 & 0 \\
      0 & 1 \\
    \end{array}
  \right)
 \right) \right)- f\left( \frac12\left(\left(
             \begin{array}{cc}
               0 & 1 \\
               0 & 0 \\
             \end{array}
           \right) + \left(
                       \begin{array}{cc}
                         0 & 0 \\
                         1 & 0 \\
                       \end{array}
                     \right)
           \right)\right)
 \right\|_1 $$ $$= \left\| \left(
 \begin{array}{cc}
 \frac12 & -\frac{\mu}{2} \\
 -\frac{\delta}{2} & \frac12 \\
  \end{array}
  \right) \right\|_1 = \frac12 \left( \sqrt{2-|\overline{\delta} +\mu|} + \sqrt{2+|\overline{\delta} +\mu|}\right).$$ Since $$\left\| \left(
  \begin{array}{cc}
  \frac12 & -\frac12 \\
  -\frac12 & \frac12 \\
   \end{array}
  \right) \right\|_1=1,$$ we have $$ \sqrt{2-|\overline{\delta} +\mu|} + \sqrt{2+|\overline{\delta} +\mu|} = 2,$$ which implies $2=|\overline{\delta} +\mu|,$ and hence $\overline{\delta} =\mu$.\smallskip

Similarly, when we are in case \eqref{eq 12 pre} we get $\overline{\delta} =\mu$ and hence \eqref{eq counterdiagonal transpose} holds.\smallskip

Let us assume we are in the case derived from \eqref{eq counterdiagonal}, that is, $$g\left( \left(
                                                          \begin{array}{cc}
                                                            0 & 1 \\
                                                            0 & 0 \\
                                                          \end{array}
                                                        \right) \right) =  \left(
                                                          \begin{array}{cc}
                                                            0 & \mu \\
                                                            0 & 0 \\
                                                          \end{array}
                                                        \right)  \hbox{ and } g\left( \left(
                                                          \begin{array}{cc}
                                                            0 & 0 \\
                                                            1 & 0 \\
                                                          \end{array}
                                                        \right) \right) =  \left(
                                                          \begin{array}{cc}
                                                            0 & 0 \\
                                                            \overline{\mu} & 0 \\
                                                          \end{array}
                                                        \right).$$

Consider the unitary $v=\left(
         \begin{array}{cc}
           \sqrt{\mu} & 0 \\
           0 & \overline{\sqrt{\mu}} \\
         \end{array}
       \right)$ and the surjective linear isometry $T: S_1(H)\to S_1(H)$, $T(x) = v^* x v$. It is easy to see that $T\left(\left(
                                \begin{array}{cc}
                                  1 & 0 \\
                                  0 & 0 \\
                                \end{array}
                              \right)
        \right) =  \left(
                                \begin{array}{cc}
                                  1 & 0 \\
                                  0 & 0 \\
                                \end{array}
                              \right),$ $T\left(\left(
                                \begin{array}{cc}
                                  0 & 0 \\
                                  0 & 1 \\
                                \end{array}
                              \right)
        \right) =  \left(
                                \begin{array}{cc}
                                  0 & 0 \\
                                  0 & 1 \\
                                \end{array}
                              \right),$ $T\left(\left(
                                \begin{array}{cc}
                                  0 & \mu \\
                                  0 & 0 \\
                                \end{array}
                              \right)
        \right) =  \left(
                                \begin{array}{cc}
                                  0 & 1 \\
                                  0 & 0 \\
                                \end{array}
                              \right)$ and $T\left(\left(
                                \begin{array}{cc}
                                  0 & 0 \\
                                  \overline{\mu} & 0 \\
                                \end{array}
                              \right)
        \right) =  \left(
                                \begin{array}{cc}
                                  0 & 0 \\
                                  1 & 0 \\
                                \end{array}
                              \right).$
Therefore, replacing $g$ with $h = T|_{S(S_1(H))} g$, we obtain a surjective isometry $h : S(S_1(H))\to S(S_1(H))$ satisfying $h\left(\left(
                                \begin{array}{cc}
                                  1 & 0 \\
                                  0 & 0 \\
                                \end{array}
                              \right)
        \right) =  \left(
                                \begin{array}{cc}
                                  1 & 0 \\
                                  0 & 0 \\
                                \end{array}
                              \right),$ \linebreak $h\left(\left(
                                \begin{array}{cc}
                                  0 & 0 \\
                                  0 & 1 \\
                                \end{array}
                              \right)
        \right) =  \left(
                                \begin{array}{cc}
                                  0 & 0 \\
                                  0 & 1 \\
                                \end{array}
                              \right),$ $h\left(\left(
                                \begin{array}{cc}
                                  0 & 1 \\
                                  0 & 0 \\
                                \end{array}
                              \right)
        \right) =  \left(
                                \begin{array}{cc}
                                  0 & 1 \\
                                  0 & 0 \\
                                \end{array}
                              \right)$ and $h\left(\left(
                                \begin{array}{cc}
                                  0 & 0 \\
                                  1 & 0 \\
                                \end{array}
                              \right)
        \right) =  \left(
                                \begin{array}{cc}
                                  0 & 0 \\
                                  1 & 0 \\
                                \end{array}
                              \right).$\smallskip

When \eqref{eq counterdiagonal transpose} holds we can find surjective linear isometry $T: S_1(H)\to S_1(H)$ such that $h = T|_{S(S_1(H))} g$ is a surjective isometry satisfying $h\left(\left(
                                \begin{array}{cc}
                                  1 & 0 \\
                                  0 & 0 \\
                                \end{array}
                              \right)
        \right) =  \left(
                                \begin{array}{cc}
                                  1 & 0 \\
                                  0 & 0 \\
                                \end{array}
                              \right),$ $h\left(\left(
                                \begin{array}{cc}
                                  0 & 0 \\
                                  0 & 1 \\
                                \end{array}
                              \right)
        \right) =  \left(
                                \begin{array}{cc}
                                  0 & 0 \\
                                  0 & 1 \\
                                \end{array}
                              \right),$ $h\left(\left(
                                \begin{array}{cc}
                                  0 & 1 \\
                                  0 & 0 \\
                                \end{array}
                              \right)
        \right) =  \left(
                                \begin{array}{cc}
                                  0 & 0 \\
                                  1 & 0 \\
                                \end{array}
                              \right)$ and \linebreak $h\left(\left(
                                \begin{array}{cc}
                                  0 & 0 \\
                                  1 & 0 \\
                                \end{array}
                              \right)
        \right) =  \left(
                                \begin{array}{cc}
                                  0 & 1 \\
                                  0 & 0 \\
                                \end{array}
                              \right).$\smallskip

We shall establish now a technical proposition to measure the distance between two pure atoms in $C_1(H)$. For each pure atom $e=\eta\otimes \xi$ in $S(C_1(H))$, as before, let $s(e)=\eta\otimes \xi$ be the unique minimal partial isometry in $B(H)$ satisfying $e (s(e)) = 1$. For any other $x\in S(C_1(H))$ the evaluation $x(s(e))\in \mathbb{C}$. For each partial isometry $s$ in $B(H)$, the Bergmann operator $P_0(s) : B(H)\to B(H),$ $x\mapsto (1-ss^*) x(1-s^*s)$, is weak$^*$ continuous and hence $P_0(s)^* (x)\in C_1(H)$ for every $x\in C_1(H)$.

\begin{lemma}\label{l distance between pure atoms} Let $H$ be a complex Hilbert space. Suppose $e_1, e_2$ are two rank one pure atoms in $S(C_1(H))$. Then the following formula holds: $$\| e_2-e_1 \|_{1} = \sum_{k=1,2} \sqrt{(1-  \Re\hbox{e} (\alpha)) +(-1)^k \sqrt{ (1-  \Re\hbox{e} (\alpha))^2 - \hat\delta^2}} ,$$ where $\alpha=\alpha (e_1,e_2) = e_2 (s(e_1))$ and $\hat\delta =\hat\delta (e_1,e_2) = \|P_0(s(e_1))^* (e_2)\|_1$.
\end{lemma}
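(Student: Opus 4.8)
The plan is to reduce the computation of $\|e_2-e_1\|_1$ to the two singular values of the finite-rank operator $A:=e_2-e_1$ and then to recognise the two elementary symmetric functions of those singular values as the quantities appearing in the statement. Writing $e_j=\eta_j\otimes\xi_j$ with $\xi_j,\eta_j\in S(H)$, the operator $A$ has range inside $\hbox{span}\{\eta_1,\eta_2\}$ and annihilates $\hbox{span}\{\xi_1,\xi_2\}^{\perp}$, so $\hbox{rank}(A)\le 2$ and the whole computation takes place in (at most) two-dimensional subspaces. In particular $\|A\|_1=\sigma_1+\sigma_2$, where $\sigma_1^2,\sigma_2^2$ are the eigenvalues of $A^*A$; hence it suffices to compute $\sigma_1^2+\sigma_2^2$ and $\sigma_1^2\sigma_2^2$ and to solve the resulting quadratic.

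First I would compute $\sigma_1^2+\sigma_2^2=\hbox{tr}(A^*A)=\|A\|_2^2$. Since $e_1,e_2$ are norm-one rank-one operators, $\|e_1\|_2=\|e_2\|_2=1$, so $\|A\|_2^2=2-2\Re\hbox{e}\,\hbox{tr}(e_2^*e_1)$. The crux of this step is that, by the very definition of the support partial isometry $s(e_1)$ and of the functional pairing on $B(H)$, the scalar $\hbox{tr}(e_2^*e_1)$ agrees, up to a conjugation that is irrelevant after taking real parts, with $\alpha=e_2(s(e_1))$; this is a direct rank-one computation. Consequently $\sigma_1^2+\sigma_2^2=2(1-\Re\hbox{e}(\alpha))$.

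The more delicate step is $\sigma_1^2\sigma_2^2=\hat\delta^2$. Factoring $A=UV^*$ with $U=[\,\eta_2\mid-\eta_1\,]$ and $V=[\,\xi_2\mid\xi_1\,]$, the nonzero eigenvalues of $A^*A$ coincide with those of the $2\times2$ matrix $(U^*U)(V^*V)$, so their product is $\det(U^*U)\det(V^*V)=(1-|\langle\eta_1|\eta_2\rangle|^2)(1-|\langle\xi_1|\xi_2\rangle|^2)$, a product of two Gram determinants. On the other hand, unravelling the Bergmann operator shows that $P_0(s(e_1))^*(e_2)$ is the single rank-one operator obtained by compressing $e_2$ to the orthogonal complements of the initial and final spaces of $s(e_1)$, namely the ranges of $1-s(e_1)^*s(e_1)$ and $1-s(e_1)s(e_1)^*$; its trace norm is the product of the norms of the two compressed vectors, giving $\hat\delta^2=(1-|\langle\eta_1|\eta_2\rangle|^2)(1-|\langle\xi_1|\xi_2\rangle|^2)$. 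Matching the two expressions yields $\sigma_1^2\sigma_2^2=\hat\delta^2$. I expect this identification—correctly computing $P_0(s(e_1))^*(e_2)$ and recognising the resulting scalar as the product of Gram determinants—to be the main obstacle; careful bookkeeping of the conventions for $\otimes$, for the adjoint $s(e_1)^*$, and for the evaluation of functionals is essential here.

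Finally, $\sigma_1^2$ and $\sigma_2^2$ are the two roots of $t^2-2(1-\Re\hbox{e}(\alpha))\,t+\hat\delta^2=0$, that is $\sigma_k^2=(1-\Re\hbox{e}(\alpha))+(-1)^k\sqrt{(1-\Re\hbox{e}(\alpha))^2-\hat\delta^2}$ for $k=1,2$; taking square roots and summing gives the stated formula. No inequalities need to be verified by hand: since $\sigma_1^2,\sigma_2^2$ are genuine nonnegative eigenvalues, both $1-\Re\hbox{e}(\alpha)\ge 0$ and $(1-\Re\hbox{e}(\alpha))^2\ge\hat\delta^2$ hold automatically, and the degenerate case $\hbox{rank}(A)\le 1$ (where $\hat\delta=0$) is already covered, the $k=1$ summand then vanishing.
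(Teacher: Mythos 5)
Your argument is correct and follows essentially the same route as the paper: both proofs reduce $\| e_2-e_1 \|_1$ to the two singular values of the rank-$\le 2$ operator $A=e_2-e_1$ and extract them from the trace and the determinant of $A^*A$, the paper doing this in an explicit $2\times 2$ matrix representation (using the rank-one relation $\alpha\delta=\beta\gamma$ and citing the eigenvalue computation from an earlier paper), while you obtain the determinant coordinate-freely as a product of Gram determinants via the factorisation $A=UV^*$. Your identification $\hat\delta^2=(1-|\langle\eta_1|\eta_2\rangle|^2)(1-|\langle\xi_1|\xi_2\rangle|^2)$ matches the paper's $|\delta|^2$, and your caveat about the tensor and pairing conventions is well placed, since getting $\Re\hbox{e}(\alpha)$ and the Bergmann compression right is exactly where the bookkeeping matters.
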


\begin{proof} By choosing an appropriate matrix representation we can find two orthonormal systems $\{\eta_1, \eta_2\}$ and $\{\xi_1, \xi_2\}$ to
represent $e_1$ and $e_2$ in the form $e_1 = \eta_1\otimes \xi_1$, $e_2 = \widetilde{\eta}_1\otimes \widetilde{\xi}_1$ and $$e_2=\alpha  v_{11}+ \beta v_{12}+ \delta v_{22}+\gamma v_{21},$$ where
$e_1= v_{11}$, $v_{12} = {\eta}_2 \otimes {\xi}_1$, $v_{21} = {\eta}_1 \otimes {\xi}_2$, $v_{22} = {\eta}_2 \otimes {\xi}_2$,
$\alpha= \langle \xi_1 / \widetilde{\xi}_1 \rangle \langle  \widetilde{\eta}_1 / \eta_1 \rangle,$ $ \beta = \langle \xi_1 / \widetilde{\xi}_1 \rangle \langle  \widetilde{\eta}_1 / \eta_2 \rangle,$  $\gamma= \langle \xi_2 / \widetilde{\xi}_1 \rangle \langle  \widetilde{\eta}_1 / \eta_1 \rangle,$ $\delta = \langle \xi_2 / \widetilde{\xi}_1 \rangle \langle  \widetilde{\eta}_1 / \eta_2 \rangle \in \mathbb{C},$ with $|\alpha|^2+ |\beta|^2+|\gamma|^2+ |\delta|^2$ $=|\langle \xi_1 / \widetilde{\xi}_1 \rangle|^2 \|\widetilde{\eta}_1\|^2+ |\langle \xi_2 / \widetilde{\xi}_1 \rangle|^2 \|\widetilde{\eta}_1 \|^2 = \| \widetilde{\xi}_1\|^2=1$, and $\alpha \delta= \beta \gamma$. In an appropriate matrix representation we can identify $e_1$ and $e_2$ with $\left(
                  \begin{array}{cc}
                    1 & 0 \\
                    0 & 0 \\
                  \end{array}
                \right),$ and $\left(
                  \begin{array}{cc}
                    \alpha & \beta \\
                    \gamma & \delta \\
                  \end{array}
                \right),$ respectively.\smallskip

Following the arguments in the proof of \cite[Proposition 3.3]{FerPe17} we deduce that the eigenvalues of the element $(e_2-e_1) (e_2-e_1)^* = \left(
                  \begin{array}{cc}
                    \alpha-1 & \beta \\
                    \gamma & \delta \\
                  \end{array}
                \right) \left(
                  \begin{array}{cc}
                    \alpha-1 & \beta \\
                    \gamma & \delta \\
                  \end{array}
                \right)^*$ are precisely  $(1-  \Re\hbox{e} (\alpha)) \pm\sqrt{ (1-  \Re\hbox{e} (\alpha))^2 - |\delta|^2}$ and hence $$\| e_2-e_1 \|_1 = \sum_{k=1,2} \sqrt{(1-  \Re\hbox{e} (\alpha)) +(-1)^k \sqrt{ (1-  \Re\hbox{e} (\alpha))^2 - |\delta|^2}} ,$$ which proves the desired formula. \end{proof}

We are now in position to solve Tingley's problem for the case of trace class operators on a two dimensional Hilbert space.

\begin{theorem}\label{t Tingley trace class dim 4} Let $f: S(C_1(H))\to S(C_1(H))$ be a surjective isometry, where $H$ is a two dimensional complex Hilbert space. Then there exists a surjective complex linear or conjugate linear isometry $T :C_1(H)\to C_1(H)$  satisfying $f(x) = T(x)$ for every $x\in S(C_1(H))$. More concretely, there exist unitaries $u,v\in M_2(\mathbb{C})$ such that one of the following statements holds:\begin{enumerate}[$(a)$] \item $f(x) = u x v$, for every $x\in S(C_1(H))$;
\item $f(x) = u x^t v$, for every $x\in S(C_1(H))$;
\item $f(x) = u \overline{x} v$, for every $x\in S(C_1(H))$;
\item $f(x) = u x^* v$, for every $x\in S(C_1(H))$,
\end{enumerate} where $\overline{(x_{ij})} = (\overline{x_{ij}})$.
\end{theorem}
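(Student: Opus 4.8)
The plan is to leverage all the normalizations already achieved in the preceding discussion, where by composing $f$ on both sides with surjective complex linear isometries (maps of the form $x\mapsto uxv$ or $x\mapsto ux^t v$) we produced a surjective isometry $h:S(C_1(H))\to S(C_1(H))$ fixing each of the four matrix units $\left(\begin{smallmatrix}1&0\\0&0\end{smallmatrix}\right)$, $\left(\begin{smallmatrix}0&0\\0&1\end{smallmatrix}\right)$, $\left(\begin{smallmatrix}0&1\\0&0\end{smallmatrix}\right)$, $\left(\begin{smallmatrix}0&0\\1&0\end{smallmatrix}\right)$. Since composing with a linear or conjugate-linear isometry does not change the conclusion of the theorem (it only alters the final $u,v$ and possibly inserts a transpose or conjugation), it suffices to prove that such a normalized $h$ is the identity on $S(C_1(H))$. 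In fact, two normalization branches appear, corresponding to \eqref{eq counterdiagonal} and \eqref{eq counterdiagonal transpose}; tracking which branch occurs, together with the linearity-versus-conjugate-linearity dichotomy of Corollary \ref{c complex linearity and antilinearity}, is exactly what distinguishes cases $(a)$--$(d)$ in the statement.

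First I would show that $h$ fixes every pure atom $e=\eta\otimes\xi$. The key tool is Lemma \ref{l distance between pure atoms}, which computes $\|e-e_1\|_1$ for a pure atom $e$ purely in terms of the two invariants $\alpha(e_1,e)=e(s(e_1))$ and $\hat\delta(e_1,e)=\|P_0(s(e_1))^*(e)\|_1$. Because $h$ is an isometry fixing the four matrix units, writing $h(e)=\left(\begin{smallmatrix}\alpha&\beta\\\gamma&\delta\end{smallmatrix}\right)$ and comparing $\|h(e)-E\|_1$ with $\|e-E\|_1$ for each fixed matrix unit $E$ forces a system of equations on $\Re\mathrm{e}(\alpha)$, $\Re\mathrm{e}(\delta)$, and the off-diagonal moduli. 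I expect that equating distances to the two diagonal units pins down the diagonal entries, and equating distances to the two off-diagonal units pins down $|\beta|,|\gamma|$ and the real parts of $\beta,\gamma$; a further use of the relation $f(ix)=\pm i f(x)$ from Proposition \ref{p minimal in arbitrary dimension}$(d)$ together with Corollary \ref{c complex linearity and antilinearity} fixes the imaginary parts, so that $h(e)=e$ for every pure atom (in the branch \eqref{eq counterdiagonal}; in branch \eqref{eq counterdiagonal transpose} one obtains $h(e)=e^t$).

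Once $h$ fixes every pure atom, I would extend to all of $S(C_1(H))$ using the real-affine behaviour established in Proposition \ref{p affine real combinations dim 2}: any $x\in S(C_1(H))$ has a spectral resolution $x=\lambda_1 e_1+\lambda_2 e_2$ with $e_1\perp e_2$ pure atoms and $\lambda_1,\lambda_2\ge 0$, $\lambda_1+\lambda_2=1$, so $h(x)=\lambda_1 h(e_1)+\lambda_2 h(e_2)=\lambda_1 e_1+\lambda_2 e_2=x$. This shows $h=\mathrm{id}_{S(C_1(H))}$, and unwinding the normalizing compositions $h=T|_{S}\circ T_2|_{S}\circ f\circ T_1^{-1}|_{S}$ expresses $f$ itself as the restriction of one of the four stated forms, the transpose appearing exactly when we were in branch \eqref{eq counterdiagonal transpose} and the conjugation appearing exactly when Corollary \ref{c complex linearity and antilinearity} puts us in the conjugate-linear alternative $f(ie_1)=-if(e_1)$.

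The main obstacle will be the middle step: showing $h$ fixes each individual pure atom rather than merely mapping pure atoms to pure atoms. Preserving the four matrix units constrains distances, but a generic pure atom $\eta\otimes\xi$ has genuinely two complex degrees of freedom (the directions of $\eta$ and $\xi$ up to a common phase), and the distance formula of Lemma \ref{l distance between pure atoms} is invariant under replacing $\alpha$ by its conjugate, so the isometry conditions alone cannot separate $h(e)$ from its conjugate or transpose. Resolving this ambiguity is precisely where the phase information carried by Corollary \ref{c complex linearity and antilinearity} and the rigidity of the $T,T_2,T_1$ normalizations must be fed back in; I expect the cleanest route is to first verify the claim on a spanning family of "balanced" pure atoms of the form $\tfrac12(e_1+e_2\pm u_1\pm u_2)$ (as already used in the proof of Corollary \ref{c complex linearity and antilinearity}), where the matrix-unit distances are most rigid, and then propagate to arbitrary pure atoms by the orthogonality-preservation of Lemma \ref{l preservation of orthogonality} and continuity.
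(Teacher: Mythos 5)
Your proposal is correct and follows essentially the same route as the paper: normalize $f$ to an isometry $h$ fixing the four matrix units, pin down each entry of $h(v)$ for a pure atom $v$ by comparing trace-norm distances to $\pm v_{jk}$ and $\pm i v_{jk}$ via Lemma \ref{l distance between pure atoms} (the paper resolves the conjugation ambiguity you flag exactly as you suggest, by first treating the balanced atoms $\frac12(v_{11}\pm v_{12}\pm v_{21}+v_{22})$ to establish $h(iv_{jk})=iv_{jk}$ and then using the $iv_{jk}$ as additional reference points), and finally extend from pure atoms to all of $S(C_1(H))$ by Proposition \ref{p affine real combinations dim 2}.
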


\begin{proof} By the comments preceding this theorem, we can find two surjective linear isometries $U,V :C_1(H)\to C_1(H)$ such that the mapping $h=U|_{S(C_1(H))} \circ f\circ V|_{S(C_1(H))} : S(C_1(H))\to S(C_1(H))$ is a surjective isometry satisfying precisely one of the next statements \begin{equation}\label{eq case A 11} h\left(\left(
                                \begin{array}{cc}
                                  1 & 0 \\
                                  0 & 0 \\
                                \end{array}
                              \right)
        \right) =  \left(
                                \begin{array}{cc}
                                  1 & 0 \\
                                  0 & 0 \\
                                \end{array}
                              \right), \ h\left(\left(
                                \begin{array}{cc}
                                  0 & 1 \\
                                  0 & 0 \\
                                \end{array}
                              \right)
        \right) =  \left(
                                \begin{array}{cc}
                                  0 & 1 \\
                                  0 & 0 \\
                                \end{array}
                              \right),
\end{equation} $$h\left(\left(
                                \begin{array}{cc}
                                  0 & 0 \\
                                  1 & 0 \\
                                \end{array}
                              \right)
        \right) =  \left(
                                \begin{array}{cc}
                                  0 & 0 \\
                                  1 & 0 \\
                                \end{array}
                              \right), \hbox{ and } h\left(\left(
                                \begin{array}{cc}
                                  0 & 0 \\
                                  0 & 1 \\
                                \end{array}
                              \right)
        \right) =  \left(
                                \begin{array}{cc}
                                  0 & 0 \\
                                  0 & 1 \\
                                \end{array}
                              \right);$$ or
\begin{equation}\label{eq case B 12} h\left(\left(
                                \begin{array}{cc}
                                  1 & 0 \\
                                  0 & 0 \\
                                \end{array}
                              \right)
        \right) =  \left(
                                \begin{array}{cc}
                                  1 & 0 \\
                                  0 & 0 \\
                                \end{array}
                              \right), \ h\left(\left(
                                \begin{array}{cc}
                                  0 & 1 \\
                                  0 & 0 \\
                                \end{array}
                              \right)
        \right) =  \left(
                                \begin{array}{cc}
                                  0 & 0 \\
                                  1 & 0 \\
                                \end{array}
                              \right),
\end{equation} $$h\left(\left(
                                \begin{array}{cc}
                                  0 & 0 \\
                                  1 & 0 \\
                                \end{array}
                              \right)
        \right) =  \left(
                                \begin{array}{cc}
                                  0 & 1 \\
                                  0 & 0 \\
                                \end{array}
                              \right), \hbox{ and } h\left(\left(
                                \begin{array}{cc}
                                  0 & 0 \\
                                  0 & 1 \\
                                \end{array}
                              \right)
        \right) =  \left(
                                \begin{array}{cc}
                                  0 & 0 \\
                                  0 & 1 \\
                                \end{array}
                              \right).$$

Proposition \ref{p minimal in arbitrary dimension}$(d)$ and $(e)$ implies that $f(- z) = -f(z)$ and $h(- z) = -h(z)$ for every pure atom $z$ in $C_1(H)$.\smallskip

We assume that \eqref{eq case A 11} holds. Let us denote
$v_{11} = \left(
            \begin{array}{cc}
              1 & 0 \\
              0 & 0 \\
            \end{array}
          \right)$, $v_{12} = \left(
            \begin{array}{cc}
              0 & 1 \\
              0 & 0 \\
            \end{array}
          \right)$,  $v_{21} = \left(
            \begin{array}{cc}
              0 & 0 \\
              1 & 0 \\
            \end{array}
          \right)$, and $v_{22} = \left(
            \begin{array}{cc}
              0 & 0 \\
              0 & 1 \\
            \end{array}
          \right)$.\smallskip

By Proposition \ref{p minimal in arbitrary dimension}$(d)$ and Corollary \ref{c complex linearity and antilinearity}, we know that one of the next statements holds: \begin{enumerate}[$(a)$] \item $h(i v_{11}) = i h (v_{11})$ and $h(i v_{22}) = i h (v_{22})$;
\item $h(i v_{11}) = -i h (v_{11})$ and $h(i v_{22}) = -i h (v_{22})$.
\end{enumerate} The proof will be splitted into two cases corresponding to the above statements.\smallskip

\emph{Case $(a)$.} Let us assume that $(a)$ holds. We consider the pure atom
$e_1=\left(                                                                                    \begin{array}{cc}
\frac12 & \frac12 \\                                                                                      \frac12 & \frac12 \\                                                                \end{array} \right)$. Proposition \ref{p minimal in arbitrary dimension} assures that $h(e_1)$ is a pure atom, and hence it must be of the form $h(v) = \left(
                  \begin{array}{cc}
                    \alpha' & \beta' \\
                    \gamma' & \delta' \\
                  \end{array}
                \right)$ with $|\alpha'|^2+ |\beta'|^2+|\gamma'|^2+ |\delta'|^2=1$, $\alpha' \delta'= \beta' \gamma'$. By the hypothesis on $h$ and Lemma \ref{l distance between pure atoms} we get
$$ \sqrt{2} =\sum_{k=1,2} \sqrt{(1-  \frac12 ) +(-1)^k \sqrt{ (1-  \frac12 )^2 - \left(\frac12\right)^2}} = \| e_1-v_{11} \|_1$$
$$ = \| h(e_1)-v_{11} \|_1 = \sum_{k=1,2} \sqrt{(1-  \Re\hbox{e} (\alpha')) +(-1)^k \sqrt{ (1-  \Re\hbox{e} (\alpha'))^2 - |\delta'|^2}},$$ and $$\sqrt{\frac32 +\sqrt{2}} + \sqrt{\frac32 -\sqrt{2}}= \sum_{k=1,2} \sqrt{(1+  \frac12) +(-1)^k \sqrt{ (1+  \frac12)^2 - \left(\frac12\right)^2}} = \| e_1 +v_{11} \|_1$$
$$ = \| h(e_1)-h(-v_{11}) \|_1 = \sum_{k=1,2} \sqrt{(1+  \Re\hbox{e} (\alpha')) +(-1)^k \sqrt{ (1+  \Re\hbox{e} (\alpha'))^2 - |\delta'|^2}}.$$ Taking squares in both sides we get $$ 2 = 2 (1-  \Re\hbox{e} (\alpha')) + 2 |\delta'|,$$ and $$  4= 2 (1+  \Re\hbox{e} (\alpha')) + 2 |\delta'|,$$ which gives $\Re\hbox{e} (\alpha') = \frac12$ and $ |\delta'|=\frac12$.\smallskip

When in the above arguments we replace $v_{11}$ with $v_{12}$, $v_{21}$ and $v_{22}$ we obtain $\Re\hbox{e} (\beta') = \frac12$, $\Re\hbox{e} (\gamma') = \frac12$ and $\Re\hbox{e} (\delta') = \frac12$. Since  $|\alpha'|^2+ |\beta'|^2+|\gamma'|^2+ |\delta'|^2=1$, we deduce that $\alpha' = \beta'=\gamma'= \delta' = \frac12$, and hence $h(e_1) = e_1 = \left(                                                                                    \begin{array}{cc}
\frac12 & \frac12 \\                                                                                      \frac12 & \frac12 \\                                                                \end{array} \right).$\smallskip

We can similarly show that taking $e_2 = \left(                                                                                    \begin{array}{cc}
\frac12 & -\frac12 \\                                                                                      -\frac12 & \frac12 \\                                                                \end{array} \right)$, we have $h(e_2) = e_2$.\smallskip

Now since $e_1$ and $e_2$ are orthogonal pure atoms, Proposition \ref{p minimal in arbitrary dimension}$(d)$ and $(e)$ and Corollary \ref{c complex linearity and antilinearity} imply that exactly one of the next statements holds: \begin{enumerate} \item[$(a.1)$] $h(i e_1) = i h (e_{1})$ and $h(i e_{2}) = i h (e_{2})$;
\item[$(a.2)$] $h(i e_{1}) = -i h (e_{1})$ and $h(i e_{2}) = -i h (e_{2})$.
\end{enumerate}

Let us show that the conclusion in $(a.2)$ is impossible. Indeed, if $(a.2)$ holds, Proposition \ref{p affine real combinations dim 2} implies $$\frac12 i v_{11} + \frac12 i v_{22} = \frac12  h\left(i v_{11} \right) + \frac12 h\left( i v_{22} \right)  = h\left(\frac12 i v_{11} + \frac12 i v_{22} \right) $$ $$ =h\left(\frac12 i e_1 + \frac12 i e_2 \right) = \frac12 h( i e_1 ) + \frac12 h(i e_2 ) = - \frac12 i e_1 - \frac12 i e_2 = -\frac12 i v_{11} - \frac12 i v_{22} ,$$ which is impossible.\smallskip

Since $(a.1)$ holds, we deduce, via  Proposition \ref{p affine real combinations dim 2}, that $$\frac12 h\left( i v_{12} \right) + \frac12 h\left(i v_{21} \right)= h\left(\frac12 i v_{12} + \frac12 i v_{21} \right)= h\left(\frac12 i e_1 - \frac12 i e_2 \right) $$ $$ = \frac12 h\left( i e_1 \right)- \frac12 h\left( i e_2 \right) = \frac12 i e_1 - \frac12 i e_2 = \frac12 i v_{12} + \frac12 i v_{21}.$$ We know from Corollary \ref{c complex linearity and antilinearity} that $h\left( i v_{jk} \right) \in \{\pm i h(v_{jk})\} = \{\pm i v_{jk}\}$, for every $k,j=1,2$. Thus, \begin{equation}\label{eq complex lin counterdiagonal} h\left(i v_{12} \right) = i  h\left( v_{12} \right) = i v_{12},\hbox{ and }  h\left(i v_{21} \right) = i  h\left( v_{21} \right) = i v_{21}.
\end{equation}

We shall prove that

\begin{equation}\label{eq h coincides with Id} h(v) = v, \hbox{ for every pure atom $v\in S(C_1(H))$.}
\end{equation}

Let $v$ be a pure atom (i.e. a rank one partial isometry) in $S(C_1(H))$. By Proposition \ref{p minimal in arbitrary dimension}$(b)$, $h(v)$ is a pure atom in $S(C_1(H))$. Arguing as in the proof of Lemma \ref{l distance between pure atoms}, we may assume that $v= \left(
                  \begin{array}{cc}
                    \alpha & \beta \\
                    \gamma & \delta \\
                  \end{array}
                \right),$ and $h(v)= \left(
                  \begin{array}{cc}
                    \alpha' & \beta' \\
                    \gamma' & \delta' \\
                  \end{array}
                \right),$ with $|\alpha|^2+ |\beta|^2+|\gamma|^2+ |\delta|^2=1$, $|\alpha'|^2+ |\beta'|^2+|\gamma'|^2+ |\delta'|^2=1$, $\alpha' \delta'= \beta' \gamma'$, and $\alpha \delta= \beta \gamma$.\smallskip

Applying the hypothesis on $h$ and Lemma \ref{l distance between pure atoms} we get the following equations
$$ \sum_{k=1,2} \sqrt{(1-  \Re\hbox{e} (\alpha)) +(-1)^k \sqrt{ (1-  \Re\hbox{e} (\alpha))^2 - |\delta|^2}} = \| v-v_{11} \|_1$$
$$ = \| h(v)-v_{11} \|_1 = \sum_{k=1,2} \sqrt{(1-  \Re\hbox{e} (\alpha')) +(-1)^k \sqrt{ (1-  \Re\hbox{e} (\alpha'))^2 - |\delta'|^2}},$$ and $$ \sum_{k=1,2} \sqrt{(1+  \Re\hbox{e} (\alpha)) +(-1)^k \sqrt{ (1+  \Re\hbox{e} (\alpha))^2 - |\delta|^2}} = \| v +v_{11} \|_1$$
$$ = \| h(v)-h(-v_{11}) \|_1 = \sum_{k=1,2} \sqrt{(1+  \Re\hbox{e} (\alpha')) +(-1)^k \sqrt{ (1+  \Re\hbox{e} (\alpha'))^2 - |\delta'|^2}}.$$ Taking squares in both sides we get $$ 2 (1-  \Re\hbox{e} (\alpha)) + 2 |\delta|= 2 (1-  \Re\hbox{e} (\alpha')) + 2 |\delta'|,$$ and $$ 2 (1+  \Re\hbox{e} (\alpha)) + 2 |\delta|= 2 (1+  \Re\hbox{e} (\alpha')) + 2 |\delta'|,$$ which gives $\Re\hbox{e} (\alpha') = \Re\hbox{e} (\alpha)$ and $|\delta | = |\delta'|$.\smallskip

Then applying the above arguments to $v$, $i v_{11}$ and $-i v_{11}$ we obtain $\Im\hbox{m} (\alpha') = \Im\hbox{m} (\alpha)$, and hence $\alpha = \alpha'$.\smallskip

Having in mind that in case $(a)$ we have $h(i v_{jk}) = i v_{jk}$ for every $j,k$ (see \eqref{eq complex lin counterdiagonal}), a similar reasoning applied to $v$, $v_{22}$ and $i v_{22}$ (respectively, $v$, $v_{12}$ and $i v_{12}$  or $v$, $v_{21}$ and $i v_{21}$) gives $\delta = \delta'$ (respectively, $\beta = \beta'$ or $\gamma = \gamma'$). We have therefore shown that $h(v) = v$, for every pure atom $v$.\smallskip

Proposition \ref{p affine real combinations dim 2} assures that $h(x) = x$ for every $x\in S(C_1(H))$, and hence $f = U^{-1} V^{-1}|_{S(C_1(H))},$ where $U^{-1} V^{-1}= C_1(H)\to C_1(H)$ is a surjective complex linear isometry.\smallskip

In \emph{Case $(b)$,} we can mimic the above arguments to show that \begin{equation}\label{eq h coincides with Idc} h(v) = \overline{v}, \hbox{ for every pure atom $v\in S(C_1(H))$,}
\end{equation} where $\overline{\left(
\begin{array}{cc}
x_{11} & x_{12} \\
x_{21} & x_{21} \\
\end{array}
\right)}= \left(
\begin{array}{cc}
\overline{x_{11}} & \overline{x_{12}} \\
\overline{x_{21}} & \overline{x_{21}} \\
\end{array}
\right)
$, and consequently, $f(x) = U^{-1} (\overline{V^{-1} (x)})=T(x)$, for every $x\in S(C_1(H))$, where $T : C_1(H)\to C_1(H)$, $T(x) =U^{-1}( \overline{V^{-1} (x)})$ ($x\in C_1(H)$) is a surjective conjugate linear isometry.\smallskip

Finally, if we assume \eqref{eq case B 12}, then there exist two surjective linear isometries $U,V :C_1(H)\to C_1(H)$ such that $$f(x) = U^{-1}( {V^{-1} (x^t)})$$ or $$f(x) = U^{-1}( {V^{-1} (x^*)})$$ for every $x\in S(C_1(H)).$
\end{proof}

Before dealing with surjective isometries between the unit spheres of trace class spaces over a finite dimensional complex Hilbert space, we shall present a technical result.

\begin{proposition}\label{p surjective isometry identity on non maximal elements} Let $f: S(C_1(H))\to S(C_1(H))$ be a surjective isometry, where $H$ is a complex Hilbert space with dim$(H)=n$. Suppose $f$ satisfies the following property: given a set $\{ e_1, e_2, \ldots, e_k\}$ of mutually orthogonal pure atoms in $S(C_1(H))$ with $k< n$, and real numbers $\lambda_1,\lambda_2, \ldots, \lambda_k$ satisfying $\displaystyle \sum_{j=1}^{k} |\lambda_j|=1$ we have $\displaystyle  f\left( \sum_{j=1}^{k} \lambda_j e_j \right) = \sum_{j=1}^{k} \lambda_j e_j$. Then $f(x)=x,$ for every $x\in S(C_1(H))$.
\end{proposition}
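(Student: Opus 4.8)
The plan is to bootstrap from the hypothesis, which fixes all operators of rank at most $n-1$, up to the full-rank operators. First I record that, via the singular value decomposition \eqref{eq spectral resolution in trace class} and the fact that a sign may always be absorbed into a pure atom, the hypothesis is equivalent to saying that $f(x)=x$ for every $x\in S(C_1(H))$ of rank strictly smaller than $n$. Since $f$ is a bijection that fixes this set pointwise, it must carry the complementary set of full-rank (i.e.\ invertible) elements of $S(C_1(H))$ onto itself. Thus only the full-rank elements remain, and it suffices to show $f(x)=x$ whenever $x$ is full-rank.

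The second step pins down the support of $f(x)$. For each unitary $u\in B(H)$ the maximal proper face $\{u\}_{_\prime}$ described in \eqref{eq general form of a face finite dimension} contains all the rank-$(n-1)$ elements it supports, and these are fixed by $f$. Since, by Proposition \ref{p faces ChengDong11}, $f(\{u\}_{_\prime})=\{u'\}_{_\prime}$ for some unitary $u'$, evaluating these fixed rank-$(n-1)$ elements against $u'$ forces $u'$ to agree with $u$ on enough rank-one data to conclude $u'=u$. Hence every maximal face is $f$-invariant; as a full-rank $x$ lies in the unique maximal face $\{s(x)\}_{_\prime}$, we obtain $s(f(x))=s(x)$. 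Composing $f$ with the surjective linear isometry $z\mapsto z\,s(x)$ (right multiplication by the unitary $s(x)$), which sends $\{s(x)\}_{_\prime}$ onto the set $\{1\}_{_\prime}$ of normal states and preserves the class of low-rank elements, I reduce the statement to the following assertion: every surjective isometry $g$ of $S(C_1(H))$ fixing each element of rank at most $n-1$ fixes every full-rank positive state.

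For the final step, fix a full-rank positive state $x=\sum_{j=1}^{n}\mu_j\,\zeta_j\otimes\zeta_j$, with $\mu_j>0$ and $\sum_j\mu_j=1$. Then $g(x)$ is again a full-rank positive state, and $\|g(x)-y\|_1=\|x-y\|_1$ for every state $y$ of rank at most $n-1$, all of which are fixed by $g$. The aim is to recover $x$ from this family of distances. Working in the eigenbasis $\{\zeta_j\}$ of $x$, I would, for each pair of indices $i,j$, restrict attention to the pure atoms supported on $\mathrm{span}\{\zeta_i,\zeta_j\}$ and compare the distances from $g(x)$ and from $x$ to these fixed atoms and to their multiples by $\pm 1$ and $\pm i$; the explicit $2\times2$ distance computations of Lemma \ref{l distance between pure atoms}, together with the affine behaviour on two orthogonal atoms granted by Proposition \ref{p affine real combinations dim 2}, should identify the corresponding entries of $g(x)$ with those of $x$. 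Assembling these identifications over all pairs yields $g(x)=x$, and undoing the reductions of the first two steps gives $f(x)=x$ for every $x\in S(C_1(H))$.

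The main obstacle is exactly this last reconstruction: showing that a full-rank positive state is determined, among states, by its trace distances to the lower-rank states. For $n=2$ this is the transparent fact that the trace distance is the Euclidean distance of Bloch vectors, so the distances to all pure states already locate an interior point; but for $n\ge 3$ the quantity $\|\rho-\zeta\otimes\zeta\|_1$ is no longer a function of $\langle\zeta\,|\,\rho\,|\,\zeta\rangle$ alone, and one genuinely needs the full family of distances to the rank-$(n-1)$ states together with the $2\times2$ sectional analysis above to extract the off-diagonal data and close the argument.
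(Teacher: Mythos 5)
Your overall architecture matches the paper's: the hypothesis is indeed equivalent to $f$ fixing every element of rank $<n$, the facial structure then shows $f$ preserves each maximal face $\{u\}_{\prime}$ (the pure atoms in $\{u\}_{\prime}$ are exactly the $\eta\otimes u\eta$, all fixed, which pins down the unitary), and unitary multiplications reduce everything to showing that a surjective isometry of the sphere fixing all low-rank elements fixes every full-rank state. Up to that point the proposal is sound and parallels the paper's reduction of a general $x=\sum_j\lambda_j e_j$ to the diagonal case via $y\mapsto u_1yw_1$.

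The gap is the step you yourself flag as ``the main obstacle'': reconstructing the full-rank state $a=g(x)$ from its distances to the fixed lower-rank elements. You do not carry it out, and the tools you cite are not the right ones: Lemma \ref{l distance between pure atoms} computes $\|e_2-e_1\|_1$ only when \emph{both} operands are pure atoms, so it says nothing about $\|a-\zeta_i\otimes\zeta_i\|_1$ for full-rank $a$, and Proposition \ref{p affine real combinations dim 2} lives on a two-dimensional $H$ and gives no handle on a $2\times 2$ corner of an $n\times n$ state. The mechanism that closes the argument in the paper is the contractive pinching $M_p(z)=pzp+(1-p)z(1-p)$ played against distances to carefully chosen fixed elements. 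With $a=g\bigl(\sum_k\mu_kv_k\bigr)$ a state, positivity of $a$, $p_jap_j$ and $(1-p_j)a(1-p_j)$ gives
$$2(1-\mu_j)=\|a-v_j\|_1\geq\|M_{p_j}(a-v_j)\|_1=(1-a_{jj})+a(1-p_j)=2(1-a_{jj}),$$
so $a_{jj}\geq\mu_j$ for every $j$, and $\mathrm{tr}(a)=1$ forces equality. For the off-diagonal entries one pinches with $q=p_i+p_j$ against the distance to the fixed rank-two element $\mu_iv_i+(1-\mu_i)v_j$, whose $(i,i)$ entry matches that of $a$; the pinched $2\times2$ block then has trace norm $\sqrt{(1-\mu_i-\mu_j)^2+4|a_{ij}|^2}$, and
$$2(1-\mu_i-\mu_j)\geq\sqrt{(1-\mu_i-\mu_j)^2+4|a_{ij}|^2}+(1-\mu_i-\mu_j)$$
forces $a_{ij}=0$. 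Without this (or an equivalent) quantitative device, your ``$2\times2$ sectional analysis'' remains a hope rather than a proof, precisely because, as you correctly observe, for $n\geq3$ the distance from a full-rank state to a rank-one state is not a function of the corresponding $2\times2$ block.
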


\begin{proof} Let $\{\xi_1,\ldots,\xi_n\}$ be an orthonormal basis of $H$. We set $v_j = \xi_j\otimes \xi_j$ ($j\in \{1,\ldots,n\}$). We claim that the identity \begin{equation}\label{eq identity on main diagonal tech lemma} f\left( \sum_{j=1}^{n} \mu_j v_j \right) = \sum_{j=1}^{n} \mu_j v_j,
\end{equation} holds for every $\mu_1, \ldots, \mu_n$ in $\mathbb{R}^{+}_0$ with $\displaystyle \sum_{j=1}^{n} \mu_j=1$. We observe that we can assume that $\mu_j>0$ for every $j$, otherwise the statement is clear from the hypothesis on $f$.\smallskip

By the hypothesis on $f$ we know that $f(v_j) =v_j$ for every $j\in\{1,\ldots, n\}$. Let $1$ denote the identity in $B(H)$. Clearly $\{ v_1, v_2, \ldots, v_n\}\subseteq \{1\}_{\prime}.$ By Proposition \ref{p minimal in arbitrary dimension} (see also \eqref{eq general form of a face finite dimension}) there exists a unique unitary $w\in B(H)$ such that $f(\{1\}_{\prime}) =\{w\}_{\prime}$. Since, for each $j=1,\ldots, n$, $v_j= f(v_j)\in \{w\}_{\prime},$ we can easily deduce that $w =1$ and hence $f(\{1\}_{\prime}) =\{1\}_{\prime}$ is the face of all states (i.e., positive norm-one functional on $B(H)$).\smallskip

The element  $\displaystyle f\left( \sum_{j=1}^{n} \mu_j v_j \right)\in  f(\{1\}_{\prime}) =\{1\}_{\prime}$, and hence there exists a positive matrix $a= (a_{ij})\in M_n(\mathbb{C})$ with $\|a\|_1 = 1= \hbox{tr} (a)$ such that $\displaystyle f\left( \sum_{j=1}^{n} \mu_j v_j \right) = a.$ It should be remarked that we can also identify each $v_j$ with the matrix in $M_n(\mathbb{C})$ with entry $1$ in the $(j,j)$ position and zero otherwise.\smallskip

Let us fix a projection $p\in B(H)$. The mapping $M_p: C_1(H)\to C_1(H)$, $M_p (x) = p x p + (1-p) x (1-p)$ is linear, contractive and positive. Let $p_j$ denote the projection $\xi_j\otimes \xi_j = s(v_j) \in B(H)$. By hypothesis $$\displaystyle \|a - v_{j}\|_1 = \left\| f\left( \sum_{j=1}^{n} \mu_j v_j \right) -f (v_j)  \right\|_1 = \left\|  \sum_{j=1}^{n} \mu_j v_j   - v_j  \right\|_1 $$ $$= \sum_{k\neq j} \mu_k + 1-\mu_j = 2 \sum_{k\neq j} \mu_k = 2(1-\mu_j).$$ Having in mind that $ (1-{p_j}) a (1-{p_j})$, $ {p_j} a {p_j},$ and $a$ are positive functionals with $a(1) = \|a\|_1 = 1$, we get $$ 2(1-\mu_j) = \|a - v_{j}\|_1\geq \|M_{p_j} (a - v_{j}) \|_1 = \|{p_j} (a - v_{j}) {p_j} + (1-{p_j}) a (1-{p_j}) \|_1$$ $$=\hbox{(by orthogonality)}= \|{p_j} (a - v_{j}) {p_j} \|_1 + \| (1-{p_j}) a (1-{p_j}) \|_1 $$ $$= 1-a_{jj} +  ((1-{p_j}) a (1-{p_j})) (1-p_j) = 1-a_{jj} + a (1-p_j) = 2 (1-a_{jj}).$$ This shows that $a_{jj} \geq \mu_j$ for every $j=1,\ldots, n$. Since $$1 = a(1) = a_{11}+\ldots+a_{nn} \geq \mu_1+\ldots+\mu_n =1,$$ we deduce that $a_{jj} = \mu_j$ for every $j=1,\ldots, n$.\smallskip

We shall now show that $a_{ij} = 0$ for every $i\neq j$. For this purpose, fix $i\neq j$ and set $q = p_i +p_j\in B(H)$ and $x = \mu_i v_i + (1-\mu_i) v_j\in S(C_1(H)).$ By hypothesis $f(x) =x$, and $$ \|a - x \|_1 = \left\|  \sum_{j=1}^{n} \mu_j v_j   - x \right\|_1= \sum_{k\neq i,j} \mu_k + 1-\mu_i-\mu_j = 2 (1-\mu_i-\mu_j ).$$ Let us observe that $$\left(\left(
                                                                        \begin{array}{cc}
                                                                         \mu_i & a_{ij} \\
                                                                        \overline{a_{ij}} & \mu_j
                                                                        \end{array}
                                                                      \right) - \left(
                                                                                  \begin{array}{cc}
                                                                                    \mu_i & 0 \\
                                                                                    0 & 1-\mu_i \\
                                                                                  \end{array}
                                                                                \right)\right)^2 = \left(
                                                                        \begin{array}{cc}
                                                                         0 & a_{ij} \\
                                                                        \overline{a_{ij}} & - 1+\mu_i +\mu_j
                                                                        \end{array}
                                                                      \right)^2$$ whose eigenvalues are precisely
$$\sqrt{\frac{(1-\mu_i-\mu_j)^2 + 2 |a_{ij}|^2 \pm \sqrt{((1-\mu_i-\mu_j)^2 + 2 |a_{ij}|^2)^2 - 4 |a_{ij}|^4}}{2}},$$ and thus $$\left\| \left(
                                                                        \begin{array}{cc}
                                                                         \mu_i & a_{ij} \\
                                                                        \overline{a_{ij}} & \mu_j
                                                                        \end{array}
                                                                      \right) - \left(
                                                                                  \begin{array}{cc}
                                                                                    \mu_i & 0 \\
                                                                                    0 & 1-\mu_i \\
                                                                                  \end{array}
                                                                                \right)
 \right\|_1^2 = (1-\mu_i-\mu_j)^2 + 2 |a_{ij}|^2 + 2 |a_{ij}|^2 $$ $$=  (1-\mu_i-\mu_j)^2 + 4 |a_{ij}|^2.$$

Therefore, we have $$2 (1-\mu_i-\mu_j ) \geq  \|M_{q} (a - x) \|_1 = \|q (a - x) q + (1-q) a (1-q) \|_1$$ $$=\hbox{(by orthogonality)}= \|q (a - x) q \|_1 + \| (1-q) a (1-q) \|_1 $$ $$= \left\| \left(
                                                                        \begin{array}{cc}
                                                                         \mu_i & a_{ij} \\
                                                                        \overline{a_{ij}} & \mu_j
                                                                        \end{array}
                                                                      \right) - \left(
                                                                                  \begin{array}{cc}
                                                                                    \mu_i & 0 \\
                                                                                    0 & 1-\mu_i \\
                                                                                  \end{array}
                                                                                \right)
 \right\|_1 + a (1-q) $$ $$= \sqrt{(1-\mu_i-\mu_j)^2 + 4 |a_{ij}|^2} + a(1) - a(p_i) - a(p_j)  $$ $$= \sqrt{(1-\mu_i-\mu_j)^2 + 4 |a_{ij}|^2} + 1-\mu_i-\mu_j,$$ which implies that $1-\mu_i-\mu_j \geq \sqrt{(1-\mu_i-\mu_j)^2 + 4 |a_{ij}|^2},$ and hence $a_{ij} =0$ as desired. We have thus proved that $$ f\left( \sum_{j=1}^{n} \mu_j v_j \right) = a =  \sum_{j=1}^{n} \mu_j v_j,$$ which concludes the proof of \eqref{eq identity on main diagonal tech lemma}.\smallskip

Finally, let us take $x\in S(C_1(H))$. We can find a set $\{ e_1, e_2, \ldots, e_n\}$ of mutually orthogonal pure atoms in $S(C_1(H))$ and real numbers $\lambda_1,\lambda_2, \ldots, \lambda_n$ such that $\displaystyle x = \sum_{j=1}^{n} \lambda_j e_j.$ We observe that, replacing each $e_j$ with $\pm e_j$ we can always assume that $\lambda_j\geq 0$ for every $j$. Let us pick two unitary matrices $u_1, w_1\in B(H)$ satisfying $ u_1 v_j w_1 = e_j$ for every $j=1,\ldots, n$. The operator $T_{u_1,w_1}: C_1(H)\to C_1(H)$, $T(y) = u_1 y w_1$ is a surjective isometry mapping elements of rank $k$ to elements of the same rank. Consequently, the mapping $f_2 : S(C_1(H))\to S(C_1(H)),$ $f_2 (y) = u_1^* f(u_1 y w_1) w_1^*$ is a surjective isometry. For each $y\in S(C_1(H))$ with rank$(y)<n$, we have $u_1 y w_1\in S(C_1(H))$ with rank$(u_1 y w_1)<n$ and thus, by the hypothesis on $f$, $f(u_1 y w_1) = u_1 y w_1$, which implies $f_2 (y) = y$. Therefore $f_2$ satisfies the same hypothesis of $f$. Applying \eqref{eq identity on main diagonal tech lemma} we get $$u_1^* f(x) w_1^*= u_1^* f \left( \sum_{j=1}^{n} \lambda_j e_j \right)  w_1^*= u_1^* f \left( u_1 \left(\sum_{j=1}^{n} \lambda_j v_j\right) w_1 \right) w_1^* $$ $$= f_2 \left( \sum_{j=1}^{n} \lambda_j v_j \right) = \sum_{j=1}^{n} \lambda_j v_j, $$ which proves that $\displaystyle f(x)= f \left( \sum_{j=1}^{n} \lambda_j e_j \right) =   \sum_{j=1}^{n} \lambda_j e_j =x,$ as desired.
 \end{proof}

\begin{remark}\label{remark surjective real linear isometries satisfying additional hypothesis}{\rm Let $T:C_1(H) \to C_1(H)$ be a surjective real linear isometry, where $H$ is a complex Hilbert space. Since $T^*: B(H)\to B(H)$ is a surjective real linear isometry, $T^*$ and $T$ must be complex linear or conjugate linear (see \cite[Proposition 2.6]{Da}). We therefore deduce from \cite{Ru69} (see also \cite[\S 11.2]{FleJam08}) that there exist unitary matrices $u,v\in B(H)$ such that  one of the next statements holds:
\begin{enumerate}\item[$(a.1)$] $T(x) = u x v$, for every $x\in S(C_1(H))$;
\item[$(a.2)$] $T(x) = u x^t v$, for every $x\in S(C_1(H))$;
\item[$(a.3)$] $T(x) = u \overline{x} v$, for every $x\in S(C_1(H))$;
\item[$(a.4)$] $T(x) = u x^* v$, for every $x\in S(C_1(H))$,
\end{enumerate} where $\overline{(x_{ij})} = (\overline{x_{ij}})$.\smallskip

We can find a more concrete description under additional hypothesis. Suppose dim$(H)=n$. The symbol $e_{ij}\in C_1(H)$ will denote the elementary matrix with entry $1$ at position $(i,j)$ and zero otherwise.\smallskip


\noindent\emph{\textbf{Case A}} Suppose that $T(\zeta e_{kk}) = \zeta e_{kk}$ for every $k=1,\ldots,n-1$, and all $\zeta\in\mathbb{T}$, $T(e_{n1}) = \alpha e_{n1},$ and $T(e_{1n}) = \mu e_{1n}$ with $\alpha, \mu\in \mathbb{T}$. Then $T$ has the form described in case $(a.1)$ above with $$u = \left(
\begin{array}{cccc}
1 & \ldots & 0 & 0 \\
\vdots & \ \ & \vdots & \vdots \\
0& \ldots & 1 & 0\\
0& \ldots & 0 & {\alpha}\\
 \end{array}
 \right), \hbox{ and } v = \left(
\begin{array}{cccc}
1 & \ldots & 0 & 0 \\
\vdots & \ \ & \vdots & \vdots \\
0& \ldots & 1 & 0\\
0& \ldots & 0 & {\mu}\\
 \end{array}
\right).$$ If we also assume $T(e_{nn}) = e_{nn},$ then $\alpha = \overline{\mu}$.\smallskip\smallskip

For the proof we simply observe that since $T(i e_{11}) = i e_{11}$, we discard cases $(a.3)$ and $(a.4)$. The assumption $T(e_{1n}) = \mu e_{1n}$ shows that case $(a.2)$ is impossible too. Since $T$ has the form described in $(a.1)$ for suitable $u,v$. Now, $T(e_{kk}) = e_{kk}$ implies that $u_{kk} v_{kk}=1$, for all $k=1,\ldots,n-1$. Finally, it is straightforward to check that $T(e_{1n}) = \mu e_{1n}$ and $T(e_{n1}) = \alpha e_{n1}$ give the desired statement.

%

%
%
}\end{remark}

We can present now the main result of this section.

\begin{theorem}\label{t Tingley trace class finite dim} Let $f: S(C_1(H))\to S(C_1(H))$ be a surjective isometry, where $H$ is a finite dimensional complex Hilbert space. Then there exists a surjective complex linear or conjugate linear isometry $T :C_1(H)\to C_1(H)$  satisfying $f(x) = T(x)$ for every $x\in S(C_1(H))$. More concretely, there exist unitary elements $u,v\in M_n(\mathbb{C}) = B(H)$ such that one of the following statements holds:\begin{enumerate}[$(a)$] \item $f(x) = u x v$, for every $x\in S(C_1(H))$;
\item $f(x) = u x^t v$, for every $x\in S(C_1(H))$;
\item $f(x) = u \overline{x} v$, for every $x\in S(C_1(H))$;
\item $f(x) = u x^* v$, for every $x\in S(C_1(H))$,
\end{enumerate} where $\overline{(x_{ij})} = (\overline{x_{ij}})$.
\end{theorem}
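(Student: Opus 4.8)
The plan is to argue by induction on $n=\dim(H)$, the cases $n=1$ (where $C_1(H)=\mathbb{C}$ and $S(C_1(H))=\mathbb{T}$) and $n=2$ (Theorem \ref{t Tingley trace class dim 4}) serving as the base. Throughout I would exploit that the four admissible forms $x\mapsto uxv$, $ux^tv$, $u\overline{x}v$, $ux^*v$ are precisely the surjective real linear isometries of $C_1(H)$ (Remark \ref{remark surjective real linear isometries satisfying additional hypothesis}) and hence constitute a set closed under composition and inversion; consequently it suffices to show that, after pre- and post-composing $f$ with suitable maps of these four types, the resulting surjective isometry $g:S(C_1(H))\to S(C_1(H))$ is the identity. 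Fixing an orthonormal basis $\{\xi_1,\dots,\xi_n\}$ and writing $v_j=\xi_j\otimes\xi_j$, $e_{ij}=\xi_i\otimes\xi_j$, the final goal is to verify the hypothesis of Proposition \ref{p surjective isometry identity on non maximal elements}, namely that $g$ fixes every element of rank strictly smaller than $n$. After possibly pre-composing with $x\mapsto\overline{x}$, I may assume $f(i\,v_n)=i\,f(v_n)$, the propagation of this sign to the atoms I shall use being governed by Proposition \ref{p minimal in arbitrary dimension}$(d)$--$(e)$ and Corollary \ref{c complex linearity and antilinearity}.

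The first reduction uses orthogonality. By Proposition \ref{p minimal in arbitrary dimension}$(b)$ the map $f$ sends pure atoms to pure atoms, and by Lemma \ref{l preservation of orthogonality} it preserves orthogonality in both directions; hence, after a congruence normalising $f(v_n)=v_n$, the map $f$ carries $\{v_n\}^{\perp}\cap S(C_1(H))$ isometrically onto itself. Since $\{v_n\}^{\perp}\cap S(C_1(H))=S\big(C_1(H_{n-1})\big)$ with $H_{n-1}=\mathrm{span}(\xi_1,\dots,\xi_{n-1})$, the induction hypothesis applies and this restriction extends to one of the four forms on $C_1(H_{n-1})$. Extending that form to $C_1(H)$ as the identity on the $\xi_n$-direction (transpose and conjugation taken with respect to the full basis, so $v_n$ stays fixed) and composing, I may henceforth assume that $g$ fixes $v_n$ and restricts to the identity on $C_1(H_{n-1})$, i.e. $g$ fixes every element supported in $H_{n-1}$.

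The delicate point is that fixing this codimension-one corner does \emph{not} yet force $g=\mathrm{id}$: for any $b\in\mathbb{T}$ the conjugation $x\mapsto \mathrm{diag}(1,\dots,1,b)\,x\,\mathrm{diag}(1,\dots,1,\overline{b})$ is a complex linear isometry fixing every element supported in $H_{n-1}$ and fixing $v_n$, while scaling each $e_{in}$ and $e_{ni}$ by a nontrivial phase. Removing this residual diagonal freedom is the main obstacle. To handle it I would analyse $g$ on the ``last row and column''. Using the distance formula of Lemma \ref{l distance between pure atoms} together with Arazy's inequality \eqref{ineq Arazy}, exactly as in the computation leading to \eqref{eq 11 pre}--\eqref{eq counterdiagonal transpose}, one shows that each $g(e_{in})$, respectively $g(e_{ni})$, is a unimodular multiple of $e_{in}$, respectively $e_{ni}$ (the off-diagonal position being forced by orthogonality to the fixed atoms $v_j$ with $j<n$, and a conjugation twist being excluded by Proposition \ref{p minimal in arbitrary dimension}$(d)$--$(e)$). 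A final congruence of the type allowed by Remark \ref{remark surjective real linear isometries satisfying additional hypothesis} (Case A) then absorbs all these phases, the relation $\alpha=\overline{\mu}$ recorded there being forced here by $g(v_n)=v_n$, after which $g$ fixes every matrix unit $e_{ij}$.

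At this stage, arguing as in the proof of Theorem \ref{t Tingley trace class dim 4} via Lemma \ref{l distance between pure atoms}, the equality of the distances from $g(v)$ and from $v$ to the now-fixed frame $\{e_{ij}\}$ (and its $i$-multiples) forces $g(v)=v$ for every pure atom $v$. Finally, any element of rank $k<n$ can be written as $\sum_{j=1}^{k}\lambda_j e_j$ with the $e_j$ mutually orthogonal pure atoms, and therefore lies in a corner $\{e_0\}^{\perp}\cap S(C_1(H))\cong S\big(C_1(\mathbb{C}^{\,n-1})\big)$; on this corner $g$ is a self-isometry which, by the induction hypothesis, is of one of the four forms, and since it fixes all the pure atoms of the corner it must be the identity there (transpose is excluded by the off-diagonal atoms, conjugation by the $i$-multiples). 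Thus $g$ fixes every element of rank $<n$, Proposition \ref{p surjective isometry identity on non maximal elements} yields $g=\mathrm{id}$ on $S(C_1(H))$, and undoing the normalisations exhibits $f$ as a composition of maps of the four admissible types, hence itself of one of those types. The step I expect to be hardest is precisely the phase coordination of the third paragraph: guaranteeing that the unimodular factors attached to the last row and column are mutually consistent and that no transpose or conjugation twist is introduced on the directions involving $\xi_n$.
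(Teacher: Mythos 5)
Your architecture coincides with the paper's: induct on $\dim H$, use the induction hypothesis on the codimension-one corner $\{v_n\}^{\perp}$ to reduce to a surjective isometry $g$ fixing that corner and $v_n$, remove the residual diagonal phases on the last row and column, deduce that $g$ fixes every pure atom, and conclude with Proposition \ref{p surjective isometry identity on non maximal elements}. The genuine gap is exactly the step you flag as hardest and then do not carry out. A congruence by $\mathrm{diag}(1,\dots,1,b)$ and $\mathrm{diag}(1,\dots,1,c)$ multiplies \emph{every} $e_{in}$ with $i<n$ by the same phase $c$ and every $e_{nj}$ with $j<n$ by the same phase $b$; so the assertion that ``a final congruence absorbs all these phases'' presupposes that the unimodular factors $\mu_i$ with $g(e_{in})=\mu_i e_{in}$ are all equal, that the factors $\alpha_j$ with $g(e_{nj})=\alpha_j e_{nj}$ are all equal, and that $\alpha=\overline{\mu}$. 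Nothing in your second or third paragraphs yields this: orthogonality and the distance computations constrain each $e_{in}$ individually and give no relation between different indices $i$, and $g(v_n)=v_n$ only forces $bc=1$ for the correcting congruence, not the coincidence of the $\mu_i$ among themselves.

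The paper closes this by two further applications of the induction hypothesis. First, the two-dimensional corner $\{e_{22},\dots,e_{nn}\}^{\perp}$, which contains $e_{11}$, $e_{1n}$, $e_{n1}$ and $e_{nn}$, is invariant under $g$, and Remark \ref{remark surjective real linear isometries satisfying additional hypothesis} (Case A) applied there forces $\alpha_1=\overline{\mu_1}$ (this is \eqref{eq claim 1 0604}). Second, and decisively, the $n$-dimensional corners $\{e_{n1}\}^{\perp}$ and $\{e_{1n}\}^{\perp}$ are each invariant under $g$ and each contains the \emph{entire} last column (respectively, last row) except for one entry, together with enough of the already-fixed block for Remark \ref{remark surjective real linear isometries satisfying additional hypothesis} (Case A) to identify the restriction of $g$ as $x\mapsto x\,\mathrm{diag}(1,\dots,1,\mu)$ (respectively, $x\mapsto \mathrm{diag}(1,\dots,1,\overline{\mu})\,x$); this is what makes all the last-column phases equal to a single $\mu$ and all the last-row phases equal to $\overline{\mu}$ (see \eqref{eq last colum} and \eqref{eq last row}). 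Once you insert these two corner arguments, your plan goes through. Your final step --- recovering $g(v)=v$ for a pure atom $v$ from its distances to $\pm e_{ij}$ and $\pm i e_{ij}$ via the squared-distance identity $\|v-e\|_1^2=2\bigl(1-\Re\bigl(v(s(e))\bigr)\bigr)+2\hat\delta$ from Lemma \ref{l distance between pure atoms} --- is a workable and slightly more direct alternative to the paper's $2\times 2$-corner argument at that stage, provided you also justify $g(ie_{ij})=ie_{ij}$ for all $i,j$ (the paper gets this from the fact that $g$ fixes the sets $\mathbb{T}v_{11}$, $\mathbb{T}v_{12}$, $\mathbb{T}v_{21}$ pointwise, as in \eqref{eq comple linearity f_3}).
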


\begin{proof} We shall argue by induction on $n=$dim$(H)$. The case $n=2$ has been proved in Theorem \ref{t Tingley trace class dim 4}. Let us assume that the desired conclusion is true for every surjective isometry $f: S(C_1(K))\to S(C_1(K))$, where $K$ is a finite dimensional complex Hilbert space of dimension $\leq n$. Let $f: S(C_1(H))\to S(C_1(H))$ be a surjective isometry, where $H$ is an $(n+1)$-dimensional complex Hilbert space.\smallskip

Let $\{\xi_1,\ldots,\xi_{n+1}\}$ be an orthonormal basis of $H$, and let $e_{ij} = \xi_{j}\otimes \xi_i$. Clearly $e_{ij}$ is a pure atom for every $i,j$, and $\{e_{11},\ldots,e_{(n+1)(n+1)}\}$ is a maximal set of mutually orthogonal pure states in $C_1(H)$. By Proposition \ref{p minimal in arbitrary dimension} and Lemma \ref{l preservation of orthogonality} $\{f(e_{11}),\ldots,f(e_{(n+1)(n+1)})\}$ is a maximal set of mutually orthogonal pure atoms in $C_1(H)$ too. We can find unitary matrices $u_1, w_1\in M_{n+1} (\mathbb{C})$ such that $u_1 f(e_{ii}) w_1= e_{ii}$ for every $i= 1,\ldots, n+1$. We set $f_1 = u_1 f w_1$. We observe that $f$ admits an extension to a surjective real linear isometry if and only if $f_1$ does.\smallskip

Let us note that $\{e_{(n+1)(n+1)}\}^{\perp} := \{ x\in C_1(H) : x\perp  e_{(n+1)(n+1)}\}\cong C_1 (K)$ for a suitable $n$-dimensional complex Hilbert subspace of $H$. We regard $C_1(K)$ as a complemented subspace of $C_1(H)$ under the appropriate identification. Lemma \ref{l preservation of orthogonality} implies that $$f_1(S(C_1(K))) = f_1\left(\{e_{(n+1)(n+1)}\}^{\perp}\cap S(C_1(H)) \right) = \{f_1(e_{(n+1)(n+1)})\}^{\perp}\cap S(C_1(H))$$ $$= \{e_{(n+1)(n+1)}\}^{\perp}\cap S(C_1(H))= S(C_1(K)),$$ and hence $f_1|_{S(C_1(K))} : S(C_1(K))\to S(C_1(K))$ is a surjective isometry. By the induction hypothesis, there exist unitaries $u_n,v_n\in M_n(\mathbb{C}) = B(K)$ such that one of the following statements holds:\begin{enumerate}[$(1)$] \item $f_1(x) = u_n x v_n$, for every $x\in S(C_1(K))$;
\item $f_1(x) = u_n x^t v_n$, for every $x\in S(C_1(K))$;
\item $f_1(x) = u_n \overline{x} v_n$, for every $x\in S(C_1(K))$;
\item $f_1(x) = u_n x^* v_n$, for every $x\in S(C_1(K))$.
\end{enumerate} Let $u_{n+1} = \left(
                                 \begin{array}{cc}
                                   u_n & 0 \\
                                   0 & 1 \\
                                 \end{array}
                               \right)$ and $v_{n+1} = \left(
                                 \begin{array}{cc}
                                   v_n & 0 \\
                                   0 & 1 \\
                                 \end{array}
                               \right)$. In each case from $(1)$ to $(4)$, we can define via the unitaries $u_{n+1},v_{n+1}$ in $B(H) = M_{n+1} (\mathbb{C})$, the involution $^*$, the transposition and the conjugation $\overline{\, \cdot \, }$, a surjective complex linear or conjugate linear isometry $T_1 :C_1(H)\to C_1(H)$ such that $T_1 f_1 (x) = x,$ for every $x\in S(C_1(K))$ and $T_1 f_1(e_{(n+1)(n+1)}) = e_{(n+1)(n+1)}$.\smallskip

We deal now with the mapping $f_2 = T_1 f_1,$ which is a surjective isometry from $S(C_1(H))$ onto itself and satisfies \begin{equation}\label{eq new 14Feb 1} f_2 (x) = x,\hbox{ for every }x\in S(C_1(K))= \{e_{(n+1)(n+1)}\}^{\perp}\cap S(C_1(H)),
  \end{equation}  and $f_2(e_{(n+1)(n+1)}) = e_{(n+1)(n+1)}.$\smallskip

We claim that \begin{equation}\label{eq claim 1 0604} f_2 (e_{1(n+1)}) = \mu e_{1(n+1)}, \hbox{ and } f_2 (e_{(n+1)1}) = \overline{\mu} e_{(n+1)1},
 \end{equation} for a suitable $\mu$ in $\mathbb{T}$. Indeed, since $e_{1(n+1)}\perp e_{22},\ldots,e_{nn},e_{21}$ and $e_{(n+1)1}\perp e_{22},\ldots,e_{nn},e_{12}$, Lemma \ref{l preservation of orthogonality} implies that $$f_2 (e_{1(n+1)}) \perp f_2(e_{22})=e_{22}, \ldots, f_2 (e_{1(n+1)}) \perp  e_{nn}, f_2 (e_{1(n+1)}) \perp f_2(e_{21})=e_{21},$$ and
$$f_2 (e_{(n+1)1}) \perp f_2(e_{22})=e_{22}, \ldots, f_2 (e_{1(n+1)}) \perp e_{nn}, f_2 (e_{1(n+1)}) \perp f_2(e_{12})=e_{12},$$  which implies that $$f_2(e_{1(n+1)}) = \mu  e_{1(n+1)} + \lambda e_{(n+1)(n+1)}$$ and $$f_2(e_{(n+1)1}) = \alpha  e_{1(n+1)} + \beta e_{(n+1)(n+1)}$$ with $|\alpha|^2 + |\beta|^2 =1$ and $|\lambda|^2 + |\mu|^2 =1$ (compare Proposition \ref{p minimal in arbitrary dimension}$(b)$). Since $e_{(n+1)1}\perp e_{1(n+1)}$, a new application of Lemma \ref{l preservation of orthogonality} proves that $f_2 (e_{1(n+1)}) \perp f_2 (e_{(n+1)1})$, and consequently $\lambda=\beta =0$. We have thus proved that $$f_2 (e_{1(n+1)}) = \mu e_{1(n+1)}, \hbox{ and } f_2 (e_{(n+1)1}) = \alpha e_{(n+1)1},$$ with $\mu,\alpha$ in $\mathbb{T}$. We shall show next that $\alpha = \overline{\mu}$. To this end, we observe that the subspace $$\{e_{22},\ldots,e_{nn}\}^{\perp} := \{ x\in C_1(H) : x\perp  e_{jj}, \ \forall j\in \{2,\ldots,n\}\}$$ is isometrically isomorphic to $C_1 (K_2)$ for a suitable $2$-dimensional complex Hilbert subspace $K_2$ of $H$, contains $e_{11},$ $e_{1(n+1)},$ $e_{(n+1)1},$ and $e_{(n+1)(n+1)}$, and by Lemma \ref{l preservation of orthogonality}, $f_2|_{S(C_1(K_2))} : S(C_1(K_2))\to S(C_1(K_2))$ is a surjective isometry. So, by the induction hypothesis, there exist unitaries $u_3,v_3\in B(K_2)$ such that $f_2|_{S(C_1(K_2))}$ satisfies one of the statements from $(a)$ to $(d)$ in our theorem. Having in mind that $f_2( \zeta e_{11}) = \zeta e_{11}$ for every $\zeta\in \mathbb{T}$, $f_2(e_{(n+1)(n+1)})= e_{(n+1)(n+1)}$, $f_2 (e_{1(n+1)}) = \mu e_{1(n+1)}$  and $f_2 (e_{(n+1)1}) = \alpha e_{(n+1)1}$, it can be easily seen that we can identify $u_3$ and $v_3$ with
$\left(
\begin{array}{cc}
1 & 0 \\
0 & u_{33} \\
\end{array}
\right)$ and $\left(
\begin{array}{cc}
1 & 0 \\
0 & \overline{u_{33}} \\
\end{array}
\right)$, where $u_{33}\in \mathbb{T}$, respectively (see Remark \ref{remark surjective real linear isometries satisfying additional hypothesis} \emph{\textbf{Case A}}). This shows that $\mu = \overline{u_{33}}$ and $\alpha = {u_{33}},$ which finishes the proof of \eqref{eq claim 1 0604}.\smallskip

Now, the subspace $\{e_{(n+1)1}\}^\perp \subset  C_1(H)$ is isometrically isomorphic to $C_1 (K_3)$ for a suitable $n$-dimensional complex Hilbert space $K_3$, and since $f_2 (e_{(n+1)1}) = \overline{\mu} e_{(n+1)1}$ (see \eqref{eq claim 1 0604}), Lemma \ref{l preservation of orthogonality} implies that $$f_2|_{S(C_1(K_3))} : S(C_1(K_3))\cong M_n (\mathbb{C})\to S(C_1(K_3))\cong M_n (\mathbb{C})$$ is a surjective isometry. By the induction hypothesis there exists a surjective real linear isometry $T_3 : C_1(K_3)\to C_1(K_3)$ such that $f_2|_{S(C_1(K_3))} \equiv T_3 |_{S(C_1(K_3))}$. Since $T_3 (\zeta e_{ij}) = f_2 (\zeta e_{ij}) = \zeta e_{ij}$ for every $i,j\in\{2,\ldots,n\}$ and all $\zeta \in \mathbb{T}$ (see \eqref{eq new 14Feb 1}) and  $T_3 (e_{1 (n+1)}) =f_2 (e_{1 (n+1)}) ={\mu} e_{1(n+1)}$, Remark \ref{remark surjective real linear isometries satisfying additional hypothesis} \emph{\textbf{Case A}} shows that $T_3 (x) = x v_3$, where $v_3$ identifies with the matrix $\left(
\begin{array}{cccc}
1 & \ldots & 0 & 0 \\
\vdots & \ \ & \vdots & \vdots \\
0& \ldots & 1 & 0\\
0& \ldots & 0 & {\mu}\\
 \end{array}
 \right).$
This implies that \begin{equation}\label{eq last colum} f_2 (z) = T_3 (z) = \mu z,
\end{equation} for every $z\in S(C_1(K_3))$ with $z = z p_{n+1}$, where $p_{n+1}$ is the rank one projection $\xi_{n+1}\otimes\xi_{n+1}\in B(H)$, that is, for every $z\in S(C_1(H))$ with $z\perp e_{(n+1)1}$ and $z = z p_{n+1}$.\smallskip

Similar arguments prove that \begin{equation}\label{eq last row} f_2 (z) = \overline{\mu} z,
\end{equation} for every $z\in S(C_1(H))$ with $z\perp e_{1(n+1)}$ and $z = p_{n+1} z$.\smallskip

Let us consider the unitaries $u_4 = \left(
\begin{array}{cccc}
1 & \ldots & 0 & 0 \\
\vdots & \ \ & \vdots & \vdots \\
0& \ldots & 1 & 0\\
0& \ldots & 0 & {\mu}\\
 \end{array}
 \right)\in B(H),$ and $v_4 = \left(
\begin{array}{cccc}
1 & \ldots & 0 & 0 \\
\vdots & \ \ & \vdots & \vdots \\
0& \ldots & 1 & 0\\
0& \ldots & 0 & \overline{\mu}\\
 \end{array}
\right)\in B(H),$ and the surjective complex linear isometry defined by $T_4 (x) = u_4 x v_4$. Let $f_3: S(C_1(H))\to S(C_1(H))$ be the surjective isometry defined by $f_3 (x) = T_4 (f_2(x))$ ($x\in S(C_1(H))$). Since $T_4 (y) = y$ for every $y\in \{e_{(n+1)(n+1)}\}^{\perp}$, $T_4(e_{(n+1)(n+1)})= e_{(n+1)(n+1)}$, $T_4 (z) = \overline{\mu} z,$ for all $z\in S(C_1(H))$ with $z\perp e_{(n+1)1}$ and $z = z p_{n+1}$, and $T_4 (z) = {\mu} z,$ for all $z\in S(C_1(H))$ with $z\perp e_{1(n+1)}$ and $z = p_{n+1} z$, we deduce from \eqref{eq new 14Feb 1}, \eqref{eq last colum}, and \eqref{eq last row} that \begin{equation}\label{eq conditions for f_3} f_3 (e_{(n+1)(n+1)}) = e_{(n+1)(n+1)}, \hbox{ and } f_3 (x) = x,
\end{equation} for all $x$ in the intersection of $S(C_1(H))$ with the set $$ \{e_{(n+1)(n+1)}\}^{\perp} \cup \{z\perp e_{(n+1)1} \& z = z p_{n+1} \}\cup \{z\perp e_{1(n+1)} \& z = p_{n+1} z\}.$$\smallskip

We shall show next that \begin{equation}\label{identity on pure atoms} f_3 (v) = v,
 \end{equation} for every pure atom $v\in S(C_1 (H)).$ Let $v =\eta\otimes \zeta$ be a pure atom in $C_1(H)$. We can always write $\zeta = \lambda_1 \zeta_1 + \lambda_2 \xi_{n+1}$ and $\eta = \mu_1 \eta_1 + \mu_2 \xi_{n+1}$, with $\lambda_j,\mu_j\in\mathbb{C},$ $|\mu_1|^2 + |\mu_2|^2=1,$ $|\lambda_1|^2 + |\lambda_2|^2=1,$ and $\eta_1,\zeta_1$ are norm one elements in $\{\xi_{n+1}\}^{\perp}$. As before, we can write $$v=\alpha v_{11}+ \beta v_{12}+ \delta v_{22}+\gamma v_{21},$$ where $v_{11}=\eta_1\otimes \zeta_1$, $v_{12} = {\eta}_1 \otimes {\xi}_{n+1}$, $v_{21} = {\xi}_{n+1} \otimes {\zeta}_1$, $v_{22} = {\xi}_{n+1} \otimes {\xi}_{n+1}$, $|\alpha|^2+ |\beta|^2+|\gamma|^2+ |\delta|^2=1$, and $\alpha \delta= \beta \gamma$.\smallskip

Let us note that $\mathbb{T} v_{11}\subseteq \{e_{(n+1)(n+1)}\}^{\perp}$, $\mathbb{T} v_{12}\subseteq  \{z\perp e_{(n+1)1} \& z = z p_{n+1} \}\cap S(C_1(H))$, and $\mathbb{T} v_{12}\subseteq \{z\perp e_{1(n+1)} \& z = p_{n+1} z\}\cap S(C_1(H))$, it follows from \eqref{eq conditions for f_3} that \begin{equation}\label{eq comple linearity f_3} f_3 (\mu v_{ij}) = \mu f_3 (v_{ij})\ \hbox{ for all } (i,j)\in\{(1,1),(1,2),(2,1)\},\hbox{ and } |\mu|=1.
\end{equation}

Let us consider the space $v\in \{v_{11},v_{12},v_{21},v_{22} \}^{\perp\perp}= \hbox{Span} \{v_{11},v_{12},v_{21},v_{22} \} \cong M_2(\mathbb{C})$ whose unit sphere is fixed by $f_3$ (compare Lemma \ref{l preservation of orthogonality}). It follows from the induction hypothesis (i.e. $f_3$ satisfies one of the statements from $(a)$ to $(d)$ in the statement of the theorem for every element in the unit sphere of $\{v_{11},v_{12},v_{21},v_{22} \}^{\perp\perp}$) and \eqref{eq comple linearity f_3} that $$f_3(v) = v =\alpha v_{11}+ \beta v_{12}+ \delta v_{22}+\gamma v_{21},$$ which finishes the proof of \eqref{identity on pure atoms}.\smallskip

We shall next prove that $f_3$ satisfies the hypothesis of the above Proposition \ref{p surjective isometry identity on non maximal elements}. Let $\{ v_1, v_2, \ldots, v_k\}$ be a set of mutually orthogonal pure atoms in $S(C_1(H))$ with $k< n+1$, and real numbers $\lambda_1,\lambda_2, \ldots, \lambda_k$ satisfying $\displaystyle \sum_{j=1}^{k} |\lambda_j|=1$. Since dim$(H)=n+1$, we can find a non-empty finite set of pure atoms $\{v_{k+1},\ldots, v_{n+1}\}$ such that $\{v_{k+1},\ldots, v_{n+1}\}^{\perp} = \{v_{1},\ldots, v_{k}\}$. By \eqref{identity on pure atoms} $f_3(v_j) =v_j$ for every $j$, and then Lemma \ref{l preservation of orthogonality} assures that $f_3\left( \{v_{k+1},\ldots, v_{n+1}\}^{\perp} \cap S(C_1(H))\right) = \{v_{k+1},\ldots, v_{n+1}\}^{\perp} \cap S(C_1(H)).$ Having in mind that $\{v_{k+1},\ldots, v_{n+1}\}^{\perp} \cong C_1(H')$, where $H'$ is a complex Hilbert space with dim$(H') = k<n+1$, and $f_3|_{S(C_1(H'))} : S(C_1(H'))\to S(C_1(H'))$, it follows from the induction hypothesis the existence of a surjective real linear isometry $R: C_1(H')\to C_1(H')$ such that $f_3(x) = R(x)$ for all $x\in S(C_1(H'))$. Applying \eqref{identity on pure atoms} we get $$ f_3\left( \sum_{j=1}^{n} \mu_j v_j \right)= R\left( \sum_{j=1}^{n} \mu_j v_j \right) =  \sum_{j=1}^{n} \mu_j R(v_j) =  \sum_{j=1}^{n} \mu_j f_3(v_j) =  \sum_{j=1}^{n} \mu_j v_j.$$ \smallskip

Finally, since $f_3$ satisfies the hypothesis of the above Proposition \ref{p surjective isometry identity on non maximal elements}, we deduce from this result that $f_3 (x) = x$, for every $x\in S(C_1(H))$.
\end{proof}

\section{Surjective isometries between the unit spheres of two arbitrary trace class spaces}\label{sec:4}

In this section we consider the trace class operators on a complex Hilbert space $H$ of arbitrary dimension. The answers obtained in the finite dimensional case can be now applied to simplify the study.

\begin{theorem}\label{t Tingley for trace class infinite dimension}  Let $f: S(C_1(H))\to S(C_1(H))$ be a surjective isometry, where $H$ is an arbitrary complex Hilbert space. Then there exists a surjective complex linear or conjugate linear isometry $T :C_1(H)\to C_1(H)$  satisfying $f(x) = T(x)$ for every $x\in S(C_1(H))$.
\end{theorem}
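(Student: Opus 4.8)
=== BEGIN PROOF PROPOSAL ===

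\textbf{The plan.} The strategy is to reduce the infinite-dimensional problem to the finite-dimensional Theorem \ref{t Tingley trace class finite dim} by an exhaustion argument over finite-dimensional ``corners'' of $C_1(H)$, and then to glue the resulting local linear isometries into a single global one. The key technical tools are Lemma \ref{l preservation of orthogonality} (preservation of orthogonality in both directions), Proposition \ref{p minimal in arbitrary dimension} (facial stability, preservation of extreme points, and the behaviour of $f$ on scalar multiples of pure atoms), and the fact that for any finite orthogonal family of pure atoms $\{v_1,\ldots,v_k\}$ the set $\{v_1,\ldots,v_k\}^{\perp\perp}$ is isometrically a copy of $C_1(K)$ for a finite-dimensional $K$, whose sphere is mapped onto itself by $f$.

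\textbf{First steps.} First I would fix an orthonormal basis $\{\xi_i\}_{i\in I}$ of $H$ and write $e_{ij}=\xi_j\otimes\xi_i$, a maximal family of mutually orthogonal pure atoms. By Proposition \ref{p minimal in arbitrary dimension}$(b)$ and Lemma \ref{l preservation of orthogonality}, $\{f(e_{ii})\}_{i\in I}$ is again a maximal orthogonal family of pure atoms, so after composing $f$ with a suitable surjective complex linear isometry $x\mapsto u x w$ (unitary $u,w\in B(H)$ mapping $f(e_{ii})$ back to $e_{ii}$) I may assume $f$ fixes every $e_{ii}$. Next, for each finite subset $F\subset I$ with $|F|\ge 2$, the subspace $\mathcal{C}_F:=\{e_{ii}:i\notin F\}^{\perp}$ is isometric to $C_1(\mathbb{C}^{|F|})$; by Lemma \ref{l preservation of orthogonality} its sphere is preserved by $f$, and Theorem \ref{t Tingley trace class finite dim} gives unitaries $u_F,v_F$ (and a choice of one of the four types: identity, transpose, conjugate, adjoint) implementing $f|_{S(\mathcal{C}_F)}$.

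\textbf{The gluing and the main obstacle.} The heart of the argument, and the step I expect to be hardest, is showing that these finite-dimensional pieces are mutually compatible so that a single global type and a single pair of (possibly non-compact, i.e. genuinely unitary) implementing operators can be extracted. The compatibility of the \emph{type} across overlapping $F$ is forced because the type is detected on any fixed $M_2(\mathbb{C})$-corner (e.g. whether $f$ sends $e_{12}$ to a multiple of $e_{12}$ or of $e_{21}$, and whether it is complex linear or conjugate linear on $\mathbb{T}e_{11}$, via Proposition \ref{p minimal in arbitrary dimension}$(d)$ and Corollary \ref{c complex linearity and antilinearity}): two finite corners sharing such a $2\times 2$ block must agree, and since $H$ is connected through these blocks the type is globally constant. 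Having fixed the type, the phases $u_{ij},v_{ij}$ determined on each block must then be shown to patch into consistent unitaries $u,v\in B(H)$; I would normalize using the diagonal fixed points and the relation $\alpha=\overline{\mu}$ of Remark \ref{remark surjective real linear isometries satisfying additional hypothesis} \emph{\textbf{Case A}} to pin down the off-diagonal phases coherently. After this normalization I may assume $f$ fixes all pure atoms in every finite corner, hence all pure atoms of $C_1(H)$.

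\textbf{Conclusion.} Once $f$ fixes every pure atom, I would argue that $f$ is the identity on all of $S(C_1(H))$. Every $x\in S(C_1(H))$ has a (norm-convergent) spectral resolution $x=\sum_n\lambda_n\,\eta_n\otimes\xi_n$ with $\sum_n|\lambda_n|=1$; truncating to the first $m$ terms gives finite-rank elements lying in a finite corner, on which the finite-dimensional result (via Proposition \ref{p surjective isometry identity on non maximal elements} applied inside each corner) yields that $f$ fixes all such finite convex combinations of orthogonal pure atoms. Since these truncations converge in $\|\cdot\|_1$ to $x$ and $f$ is an isometry, a continuity/density argument shows $f(x)=x$. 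Undoing the initial reductions, the original $f$ is the restriction of a surjective complex linear or conjugate linear isometry $T:C_1(H)\to C_1(H)$ of one of the four forms, and uniqueness follows since $T$ is determined by its values on the spanning set of pure atoms.

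=== END PROOF PROPOSAL ===
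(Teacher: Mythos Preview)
Your overall strategy---reduce to finite corners via Lemma \ref{l preservation of orthogonality} and Theorem \ref{t Tingley trace class finite dim}, then pass to the limit---matches the paper's. But there are two points where you diverge, one of which is a genuine gap.

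\textbf{The gap.} You assert that since $\{f(e_{ii})\}$ is a maximal orthogonal family of pure atoms, there exist unitaries $u,w\in B(H)$ with $u\,f(e_{ii})\,w=e_{ii}$. This does not follow. Write $f(e_{ii})=\eta_i\otimes\zeta_i$; maximality of the family $\{\eta_i\otimes\zeta_i\}$ only guarantees that \emph{at least one} of $\{\eta_i\}$, $\{\zeta_i\}$ is a basis of $H$ (if neither were, pick $\eta_0,\zeta_0$ in the respective orthogonal complements and note $\eta_0\otimes\zeta_0\perp f(e_{ii})$ for all $i$). To build the unitaries you need \emph{both} to be bases. The paper devotes a separate paragraph to the second system: assuming $\{\eta_k\}$ is a basis but $\{\zeta_k\}$ is not, it picks $\zeta_0\perp\zeta_k$ for all $k$, fixes $k_0$, sets $v_0=\eta_{k_0}\otimes\zeta_0$, observes $v_0\perp f(e_k)$ for all $k\neq k_0$, and then uses Proposition \ref{p minimal in arbitrary dimension}$(d)$,$(e)$ to force $f^{-1}(v_0)\in\mathbb{T}e_{k_0}$, whence $v_0\in\mathbb{T}f(e_{k_0})$---contradicting $\zeta_0\perp\zeta_{k_0}$. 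This step is short but essential, and it is missing from your outline.

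\textbf{The endgame.} Your plan to glue the local implementing unitaries $u_F,v_F$ into global ones, thereby normalizing $f$ to fix all pure atoms, is plausible but it is not what the paper does, and it is strictly more work. The paper bypasses the gluing entirely: once $f_2$ fixes every $e_{kk}$, it takes the positively homogeneous extension $T_2(x)=\|x\|_1\,f_2(x/\|x\|_1)$ and shows directly that $T_2$ is a surjective isometry on all of $C_1(H)$. The isometry check is an $\varepsilon/2$ approximation: given $x,y$, approximate $x/\|x\|_1$ and $y/\|y\|_1$ by elements $x_\varepsilon,y_\varepsilon$ in a common finite corner $C_1(H_F)$; on that corner $f_2$ agrees with the real linear $T_F$, so
\[
\bigl\|\,\|x\|_1 f_2(x_\varepsilon)-\|y\|_1 f_2(y_\varepsilon)\bigr\|_1
=\bigl\|T_F\bigl(\|x\|_1 x_\varepsilon-\|y\|_1 y_\varepsilon\bigr)\bigr\|_1
=\bigl\|\,\|x\|_1 x_\varepsilon-\|y\|_1 y_\varepsilon\bigr\|_1,
\]
and one lets $\varepsilon\to 0$. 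Then the Mazur--Ulam theorem gives real linearity of $T_2$ for free. This is cleaner than your route because it never requires the $T_F$'s to be mutually compatible---only that \emph{some} real linear extension exists on each corner. Your final continuity argument is correct once you have $f$ fixing all pure atoms, but the coherent-gluing step you flag as ``hardest'' is simply not needed.
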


\begin{proof} Let $\{\xi_k: k\in \mathcal{I}\}$ be an orthonormal basis of $H$. As before, we set $e_k := \xi_k\otimes \xi_k$. Then the set $\{ e_k : k\in \mathcal{I} \}$ is a maximal set of mutually orthogonal pure atoms in $S(C_1(H)).$ By Lemma \ref{l preservation of orthogonality} and Proposition \ref{p minimal in arbitrary dimension}$(b)$ the elements in the set $\{ f(e_k) : k\in \mathcal{I} \}$ are mutually orthogonal pure atoms in $C_1(H)$. We can therefore find orthonormal systems  $\{\eta_k: k\in \mathcal{I}\}$ and  $\{\zeta_k: k\in \mathcal{I}\}$ in $H$ such that $f(e_k) = \eta_k\otimes \zeta_k$ for every $k\in \mathcal{I}$.\smallskip

We claim that at least one of $\{\eta_k: k\in \mathcal{I}\}$ and $\{\zeta_k: k\in \mathcal{I}\}$ must be an orthonormal basis of $H$. Otherwise, we can find norm one elements $\eta_0$ and $\zeta_0$ in $H$ such that $\eta_0\perp \eta_k$ and $\zeta_0\perp \zeta_k$ (in $H$) for every $k$. Then the element $v_0 = \eta_0\otimes\zeta_0$ is a pure atom in $S(C_1(H))$ which is orthogonal to every $f(e_k)$. Applying Lemma \ref{l preservation of orthogonality} to $f^{-1}$ we deduce that $f^{-1} (v_0)\perp e_k$ for every $k\in \mathcal{I}$, which is impossible because $\{\xi_j: j\in \mathcal{I}\}$ is a basis of $H$. We can therefore assume that $\{\eta_k: k\in \mathcal{I}\}$ is an orthonormal basis of $H$.\smallskip

In a second step we shall show that $\{\zeta_k: k\in \mathcal{I}\}$ also is an orthonormal basis of $H$. If that is not the case, there exists $\zeta_0$ in $H$ such that $\zeta_0\perp \zeta_k$ (in $H$) for every $k$. Fix an index $k_0$ in $\mathcal{I}$ and set $v_0 := \eta_{k_0}\otimes \zeta_0\in \partial_e (\mathcal{B}_{C_1(H)})$. Clearly, $v_0 \perp f(e_k)$ for every $k\neq k_0$. Lemma \ref{l preservation of orthogonality} and Proposition \ref{p minimal in arbitrary dimension}$(b)$ imply that $f^{-1} (v_0)$ is a pure atom in $C_1(H)$ which is orthogonal to $e_k$ for every $k\neq k_0$. Since $\{\xi_j: j\in \mathcal{I}\}$ is a basis of $H$, we can easily see that $f^{-1} (v_0) =\lambda e_{k_0}$ for a unique $\lambda\in \mathbb{T}$. We deduce from Proposition \ref{p minimal in arbitrary dimension}$(d)$ and $(e)$ that $\eta_{k_0}\otimes \zeta_0= v_0 = f f^{-1} (v_0) =\mu f(e_{k_0}) = \eta_{k_0}\otimes \zeta_{k_0}$ with $\mu\in \mathbb{T}$, which contradicts the fact $\zeta_0\perp \zeta_k$ (in $H$) for every $k$.\smallskip

We have therefore shown that $\{\eta_k: k\in \mathcal{I}\}$ and $\{\zeta_k: k\in \mathcal{I}\}$ both are orthonormal basis of $H$.\smallskip

Let us pick two unitary elements $u_1,w_1\in B(H)$ such that $u_1 f(e_k) w_1 = e_k$ for every $k\in \mathcal{I}$. The mapping $f_2 :S(C_1(H))\to S(C_1(H)),$ $f_2 (x) = u_1 f(x) w_1$ is a surjective isometry and $f_2 (e_k ) = e_k$ for every $k\in \mathcal{I}$. Let $T_1$ denote the surjective complex linear isometry on $C_1(H)$ given by $T_1 (x) = u_1 x w_1$ ($x\in C_1(H)$).  \smallskip

Now, let $F$ be a finite subset of $\mathcal{I}$, and let $q_{F}$ denote the orthogonal projection of $H$ onto $H_{F} = \hbox{span} \{\xi_k: k\in F\}$. The set $\{e_k : k\notin F\}$ is invariant under $f_2$. Lemma \ref{l preservation of orthogonality} assures that $f_2 \left( \{e_k : k\notin F\}^{\perp} \cap S(C_1(H))\right) = \{e_k : k\notin F\}^{\perp} \cap S(C_1(H)),$ where $\{e_k : k\notin F\}^{\perp} \cap S(C_1(H)) = S(C_1 (H_F))$, and $$f_2|_{S(C_1 (H_F))} : S(C_1 (H_F)) \to S(C_1 (H_F))$$ is a surjective isometry. By Theorem \ref{t Tingley trace class finite dim} there exists a surjective real linear isometry $T_{_{F}} : C_1 (H_F) \to C_1 (H_F)$ such that $f_2 (x) = T_{_F} (x)$ for all $x\in S(C_1 (H_F))$.\smallskip

Let $T_2 : C_1 (H) \to C_1 (H)$  denote the homogeneous extension of $f_2$ defined by $T_2(x) := \|x\|_1 f_2\left( \frac{x}{\|x\|_1} \right)$ if $x\neq 0$ and $T_2(0)=0$. Clearly $T_2$ is surjective. We shall show that $T_2$ is an isometry. To this end, let us fix $x,y\in C_1(H)\backslash\{0\}$ and $\varepsilon>0$. Since every element in $C_1(H) (\subseteq K(H))$ can be approximated in norm by elements $x\in S(C_1(H))$ with $x = q_F  x q_F$, where $F$ is a finite subset of $\mathcal{I}$, we can find a finite set $F\subset \mathcal{I}$, $x_{\varepsilon}$ and $y_{\varepsilon}$ in $S(C_1(H))$ such that $x_{\varepsilon} = q_F  x_{\varepsilon} q_F$, $y_{\varepsilon} = q_F  y_{\varepsilon} q_F$, $\|\frac{x}{\|x\|_1} - x_{\varepsilon}\|_1<\frac{\varepsilon}{2(\|x\|_1+\|y\|_1)}$ and $\|\frac{y}{\|y\|_1} - y_{\varepsilon}\|_1<\frac{\varepsilon}{2(\|x\|_1+\|y\|_1)}$. By the triangular inequality we have $$\Big| \left\| x - y \right\|_1 - \left\| \|x\|_1 x_{\varepsilon} - \|y\|_1 y_\varepsilon \right\|_1 \Big|\leq \left\| x - \|x\|_1 x_{\varepsilon}  \right\|_1 + \left\| y -  \|y\|_1 y_\varepsilon \right\|_1$$ $$\leq \|x\|_1 \left\| \frac{x}{\|x\|_1} -  x_{\varepsilon}  \right\|_1 + \|y\|_1 \left\| \frac{y}{\|y\|_1} -  y_\varepsilon \right\|_1< \frac{\varepsilon}{2},$$ and since $f_2$ is an isometry we get
$$\Big| \left\| T_2(x) - T_2(y) \right\|_1 - \left\| T_2(\|x\|_1 x_{\varepsilon}) - T_2(\|y\|_1 y_\varepsilon) \right\|_1 \Big|$$ 
$$\leq \left\| T_2(x) - T_2(\|x\|_1 x_{\varepsilon})  \right\|_1 + \left\| T_2(y) -  T_2(\|y\|_1 y_\varepsilon) \right\|_1$$ $$\leq \|x\|_1 \left\| f_2\left(\frac{x}{\|x\|_1}\right) - f_2\left( x_{\varepsilon}\right)  \right\|_1 + \|y\|_1 \left\| f_2\left(\frac{y}{\|y\|_1}\right) -  f_2( y_\varepsilon) \right\|_1$$ $$= \|x\|_1 \left\| \frac{x}{\|x\|_1} -  x_{\varepsilon}  \right\|_1 + \|y\|_1 \left\| \frac{y}{\|y\|_1} -  y_\varepsilon \right\|_1< \frac{\varepsilon}{2}.$$

On the other hand, since $y_\varepsilon,x_\varepsilon\in S(C_1 (H_F))$, we can consider the surjective real linear isometry $T_{_{F}} : C_1 (H_F) \to C_1 (H_F)$ satisfying $f_2 (x) = T_{_F} (x)$ for all $x\in S(C_1 (H_F))$ to deduce that $$\left\| T_2(\|x\|_1 x_{\varepsilon}) - T_2(\|y\|_1 y_\varepsilon) \right\|_1  = \left\| \|x\|_1 f_2( x_{\varepsilon}) - \|y\|_1 f_2(y_\varepsilon) \right\|_1 $$ $$= \left\| \|x\|_1 T_{_F}( x_{\varepsilon}) - \|y\|_1  T_{_F} (y_\varepsilon) \right\|_1 = \left\| T_{_F}( \|x\|_1 x_{\varepsilon}  - \|y\|_1  y_\varepsilon) \right\|_1 = \left\| \|x\|_1 x_{\varepsilon}  - \|y\|_1  y_\varepsilon \right\|_1. $$ Combining this identity with the previous two inequalities we obtain $$ \left| \left\| T_2(x) - T_2(y) \right\|_1 - \left\| x - y \right\|_1 \right| < \varepsilon.$$ The arbitrariness of $\varepsilon$ shows that $\left\| T_2(x) - T_2(y) \right\|_1 = \left\| x - y \right\|_1$, and hence $T_2$ is an isometry.\smallskip

Finally, since $T_2$ is a surjective isometry with $T_2 (0)=0$, the Mazur-Ulam theorem guarantees that $T_2$ is a surjective real linear isometry, and hence $f(x) = T_1^{-1} T_2 (x)$ for all  $x\in S(C_1(H))$, witnessing the desired conclusion.
\end{proof}

\textbf{Acknowledgements} First, second and third author were partially supported by the Spanish Ministry of Economy and Competitiveness (MINECO) and European Regional Development Fund project no. MTM2014-58984-P and Junta de Andaluc\'{\i}a grant FQM375. Fourth author partially supported by grants MTM2014-54240-P, funded by MINECO and QUITEMAD+-CM, Reference: S2013/ICE-2801, funded by Comunidad de Madrid.\smallskip

The authors are indebted to the anonymous reviewer for a thorough report, insightful comments, and suggestions.

\end{document}